\DeclareFontFamily{U}{mathx}{\hyphenchar\font45}
\DeclareFontShape{U}{mathx}{m}{n}{
      <5> <6> <7> <8> <9> <10>
      <10.95> <12> <14.4> <17.28> <20.74> <24.88>
      mathx10
      }{}
\DeclareSymbolFont{mathx}{U}{mathx}{m}{n}
\DeclareMathAccent{\widecheck}      {0}{mathx}{"71}
\renewcommand{\email}[1]{\emailname: #1} 
\renewenvironment{proof}{\noindent{\itshape Proof.}}{\smartqed\qed}
\newcommand{\rmK}{{\mathrm{K}}}
\newcommand{\bbC}{{\mathbb{C}}}
\newcommand{\bbD}{{\mathbb{D}}}
\newcommand{\bbE}{{\mathbb{E}}}
\newcommand{\bbN}{{\mathbb{N}}}
\newcommand{\bbP}{{\mathbb{P}}}
\newcommand{\bbT}{{\mathbb{T}}}
\newcommand{\bbV}{{\mathbb{V}}}
\newcommand{\bbX}{{\mathbb{X}}}
\newcommand{\N}{{\mathbb{N}}} 
\newcommand{\R}{{\mathbb{R}}} 
\newcommand{\EE}{{\mathbb{E}}}
\DeclareSymbolFont{bbold}{U}{bbold}{m}{n}
\DeclareSymbolFontAlphabet{\mathbbold}{bbold}
\newcommand{\calA}{{\mathcal{A}}}
\newcommand{\calB}{{\mathcal{B}}}
\newcommand{\calO}{{\mathcal{O}}}
\newcommand{\calT}{{\mathcal{T}}}
\newcommand{\calU}{{\mathcal{U}}}
\newcommand{\ceil}[1]{\left\lceil #1 \right\rceil}    
\DeclareMathOperator{\cov}{Cov}
\DeclareMathOperator{\var}{Var}
\newcommand*{\lrscript}[5]{{\vphantom{#1}}_{#2}^{#3}{#1}_{#4}^{#5}}
\newcommand*{\dualpair}[4]{\ensuremath{\lrscript{\langle}{#1}{}{}{} #3 ,
		#4 \rangle_{#2}}}
\DeclareSymbolFont{bbold}{U}{bbold}{m}{n}
\DeclareSymbolFontAlphabet{\mathbold}{bold}
  \providecommand*{\toclevel@author}{999}
  \providecommand*{\toclevel@title}{0}
\begin{document}

\title*{A Multilevel Monte Carlo Algorithm for Parabolic Advection-Diffusion Problems with Discontinuous Coefficients}
\titlerunning{MLMC for Discontinuous Advection-Diffusion Problems}

\author{Andreas Stein \and Andrea Barth}
\institute{
Andreas Stein \and Andrea Barth 
\at SimTech, University of Stuttgart, Allmandring 5b, 70569 Stuttgart, Germany  
\email{andreas.stein@mathematik.uni-stuttgart.de}, \email{andrea.barth@mathematik.uni-stuttgart.de} 
}
\maketitle

\abstract{The Richards' equation is a model for flow of water in unsaturated soils. The coefficients of this (nonlinear) partial differential equation describe the permeability of the medium. Insufficient or uncertain measurements are commonly modeled by random coefficients. For flows in heterogeneous\textbackslash fractured\textbackslash porous media, the coefficients are modeled as discontinuous random fields, where the interfaces along the stochastic discontinuities represent transitions in the media. More precisely, the random coefficient is given by the sum of a (continuous) Gaussian random field and a (discontinuous) jump part. In this work moments of the solution to the random partial differential equation are calculated using a path-wise numerical approximation combined with multilevel Monte Carlo sampling. The discontinuities dictate the spatial discretization, which leads to a stochastic grid. Hence, the refinement parameter and problem-dependent constants in the error analysis are random variables and we derive (optimal) a-priori convergence rates in a mean-square sense.}

\keywords{Multilevel Monte Carlo method, flow in heterogeneous media, fractured media, porous media, jump-diffusion coefficient, non-continuous random fields, parabolic equation, advection-diffusion equation}

\section{Introduction}

We consider a linear (diffusion-dominated) advection-diffusion equation with random L\'evy fields as coefficients. Adopting the term from stochastic analysis, by a L\'evy field we mean a random field which is built from a (continuous) Gaussian random field and a (discontinuous) jump part (following a certain jump measure). In the last decade various ways to approximate the distribution or moments of the solution to a random equation were introduced. Next to classical Monte Carlo methods, their multilevel variants and further variance reduction techniques have been applied. Due to their low regularity constraints, multilevel Monte Carlo techniques have been successfully applied to various problems, for instance in the context of elliptic random PDEs in \cite{ABS13, BSZ11, CGST11, LWZ16, GSTU13, BS18b} to just name a few. These sampling approaches differ fundamentally from Polynomial-Chaos-based methods. The latter suffer from high regularity assumptions. While in the case of continuous fields these algorithms can outperform sampling strategies, approaches -- like stochastic Galerkin methods -- are less promising in our discontinuous setting. In fact, it is even an open problem to define them for L\'evy fields. While Richards' equation formulated as a deterministic interface problem was considered in numerous publications (see~\cite{DHK91, FO17} and the references therein), there is up-to-date no stochastic formulation.

After introducing the necessary basic notation, in this paper we show in Section~\ref{astein_sec:prelim} existence and uniqueness of a path-wise weak solution to the random advection-diffusion equation and prove an energy estimate which allows for a moment estimate. Next to space- and time-discretizations, the L\'evy field has to be approximated, resulting in an approximated path-wise weak solution. In Section~\ref{astein_sec:para_disc} we show convergence of this approximated path-wise weak solution, before we introduce a sample-adapted (path-wise) Galerkin approximation. Only if the discretization is adapted to the random discontinuities can we expect full convergence rates. 
As the main result of this article, we prove the error estimate of the spatial discretization in the $L^2$-norm.
To this end, we utilize the corresponding results with respect to the $H^1$-norm from \cite{BS18c} and consider the parabolic dual problem. 
Finally, we combine the sample-adapted spatial discretization with a suitable time stepping method to obtain a fully discrete path-wise scheme. 
The path-wise approximations are used in Section~\ref{astein_sec:mlmc} to estimate quantities of interest using a (coupled) multilevel Monte Carlo method. Naturally, the optimal sample numbers on each level depend on the sample-dependent convergence rate. The term \textit{coupled} refers to a simplified version of \textit{Multifidelity Monte Carlo} sampling (see~\cite{GPW16}) that reuses samples across levels and is preferred when sampling from a certain distribution is computationally expensive. In Section~\ref{astein_sec:num}, a numerical example confirms our theoretical results from Section~\ref{astein_sec:para_disc} and shows that the sample-adapted strategy vastly outperforms a multilevel Monte Carlo estimator with a standard Finite Element discretization in space. 
 
\section{Parabolic Problems with Random Discontinuous Coefficients}\label{astein_sec:prelim}
Let $(\Omega,\calA,\bbP)$ be a complete probability space, $\bbT=[0,T]$ be a time interval for some $T>0$ and $\bbD\subset\R^d$, $d\in\{1,2\}$, be a polygonal and convex domain.
We consider the linear, random initial-boundary value problem
\begin{equation}\label{astein_eq:pde}
\begin{split}
\partial_t u(\omega,x,t)+[Lu](\omega,x,t)&=f(\omega,x,t)\quad\text{in $\Omega\times \bbD\times(0,T])$},\\
u(\omega,x,0)&=u_0(\omega,x)\quad\text{in $\Omega\times \bbD\times\{0\}$},\\
u(\omega,x,t)&=0\quad\text{on $\Omega\times\partial \bbD\times\bbT$},
\end{split}
\end{equation}
where  $f:\Omega\times \bbD\times\bbT\to\R$ is a random source function and $u_0:\Omega\times \bbD$ denotes the initial condition of the above PDE.
Furthermore, $L$ is the second order partial differential operator given by 
\begin{equation}\label{astein_eq:L}
\begin{split}
[Lu](\omega,x,t)=-\nabla\cdot \left(a(\omega,x)\nabla u(\omega,x,t)\right)+b(\omega,x)\mathbf 1^T\nabla u(\omega,x,t)\\
\end{split}
\end{equation}
for $(\omega,x,t)\in\Omega\times \bbD\times\bbT$ with $\nabla$ operating on the second argument of $u$.
In Eq.~\eqref{astein_eq:L}, we set $\mathbf 1:=(1,\dots,1)^T\in\R^n$, such that $\mathbf 1^T\nabla u=\sum_{i=1}^n\partial_{x_i} u$, and consider
\begin{itemize}
	\item a stochastic jump-diffusion coefficient $a:\Omega\times \bbD\to\R$ and
	\item a random discontinuous convection term $b:\Omega\times \bbD\to\R$ coupled to $a$.
\end{itemize}

Throughout this article, we denote by $C$ a generic positive constant which may change from one line to the next. 
Whenever helpful, the dependence of $C$ on certain parameters is made explicit. 
To obtain a path-wise variational formulation, we use the standard Sobolev space $H^s(\bbD)$ with norm $\|\cdot\|_{H^s(\bbD)}$ for any $s>0$, see for instance \cite{AF03, DGV12}.  
Since $\bbD$ has a Lipschitz boundary, for $s\in(1/2,3/2)$, the existence of a bounded, linear trace operator $\gamma:H^s( \bbD)\to H^{s-1/2}(\partial \bbD)$  
is ensured by the trace theorem, see~\cite{D96}.
We only consider homogeneous Dirichlet boundary conditions on $\partial \bbD$, hence we may treat $\gamma$ independently of $\omega\in \Omega$
and define the suitable solution space $V$ as 
\begin{equation*}
V:=H_0^1( \bbD)=\{v\in H^1( \bbD)|\;\gamma v\equiv0\}, 
\end{equation*}
equipped with the $H^1( \bbD)$-norm $ \|v\|_V:= \|v\|_{H^1( \bbD)}$. 
With $H:=L^2( \bbD)$, we work on the Gelfand triplet $V\subset H\subset V'=H^{-1}( \bbD)$, where $V'$ denotes the topological dual of $V$, i.e. the space of all bounded, linear functionals on $V$. In the variational version of Problem~\eqref{astein_eq:pde}, $\partial_tu$ denotes the weak time derivative of $u$. Throughout this article, we may as well consider $\partial_t u$ as derivative in a strong sense (also with regard to its approximation at the end of Section~\ref{astein_sec:para_disc}) as we will always assume sufficient temporal regularity.
As the coefficients $a$ and $b$ are random functions, any solution $u$ to Problem~\eqref{astein_eq:pde} is a time-dependent $V$-valued random variable. To investigate the regularity of the solution $u$ with respect to $\bbT$ and the underlying probability measure $\bbP$ on $\Omega$, we need to introduce the corresponding Lebesgue-Bochner spaces.
To this end, let $p\in[1,\infty)$ and$(\bbX,\|\cdot\|_\bbX)$ be an arbitrary Banach space.
For $Y\in\{\bbT, \Omega\}$, the Lebesgue-Bochner space $L^p(Y;\bbX)$ is defined as
\begin{equation*}
L^p(Y;\bbX):=\{\varphi:Y\to\bbX\text{ is strongly measurable and }\|\varphi\|_{L^p(Y;\bbX)}<+\infty\},
\end{equation*}
with the norm
\begin{equation*}
\|\varphi\|_{L^p(Y;\bbX)}:=\begin{cases}
\Big(\int_\bbT\|\varphi(t)\|_\bbX^pdt\Big)^{1/p}\quad\text{for $Y=\bbT$,}\\
\EE(\|\varphi\|^p)^{1/p}=\Big(\int_\Omega\|\varphi(\omega)\|_\bbX^pd\bbP(d\omega)\Big)^{1/p}\quad\text{for $Y=\Omega$.}
\end{cases}.
\end{equation*}

The bilinear form associated to $L$ is introduced to derive a weak formulation of the initial-boundary value problem~\eqref{astein_eq:pde}.
For fixed $\omega\in\Omega$ and $t\in\bbT$, multiplying Eq.~\eqref{astein_eq:pde} with a test function $v\in V$ and integrating by parts yields
\begin{equation}\label{astein_eq:var}
\dualpair{V'}{V}{\partial_t u(\omega,\cdot,t)}{v}+B_\omega(u(\omega,\cdot,t),v)=\dualpair{V'}{V}{f(\omega,\cdot,t)}{v}.
\end{equation}
The bilinear form $B_{\omega}:V\times V\to\R$ is given by  
\begin{align*}
B_\omega(u,v)=\int_ \bbD a(\omega,x)\nabla u(x)\cdot\nabla v(x)+b(\omega,x)\mathbf 1^T\nabla u(x)v(x)dx,
\end{align*}
and $\dualpair{V'}{V}{\cdot}{\cdot}$ denotes the $(V',V)$-duality pairing. 

\begin{definition}\label{astein_def:weak}
	For fixed $\omega\in\Omega$, the \textit{path-wise weak solution} to Problem~\eqref{astein_eq:pde} is a function $u(\omega,\cdot,\cdot)\in L^2(\bbT;V)$ with $\partial_t u(\omega,\cdot,\cdot)\in L^2(\bbT;V')$ such that, for $t\in\bbT$,
	\begin{equation*}
	\dualpair{V'}{V}{\partial_t u(\omega,\cdot,t)}{v}+B_\omega(u(\omega,\cdot,t),v)=\dualpair{V'}{V}{f(\omega,\cdot,t)}{v},\quad \text{ for all}\;v\in V
	\end{equation*}
	and $u(\omega,\cdot,0)=u_0(\omega,\cdot)$. Furthermore, we define the path-wise parabolic norm by
	\begin{equation}\label{astein_eq:e_norm}
	\begin{split}
	\|u(\omega,\cdot,\cdot)\|_{*,t}:&=\Big(\|u(\omega,\cdot,t)\|_H^2+\int_0^t\int_\bbD\nabla u(\omega,x,z)\cdot\nabla u(\omega,x,z)dxdz\Big)^{1/2}\\
	&=\Big(\|u(\omega,\cdot,t)\|_H^2+\|\|\nabla u(\omega,x,z)\|_2\|^2_{L^2([0,t];H)}\Big)^{1/2},
	\end{split}
	\end{equation}
	where $\|\cdot\|_2$ is the Euclidean norm on $\R^d$.
\end{definition}

To represent the (uncertain) permeability in a subsurface flow model, we use the random jump coefficients $a,b$ from the elliptic/parabolic problems in~\cite{BS18b, BS18c}. 
The diffusion coefficient is then given by a (spatial) Gaussian random field with additive discontinuities on random areas of $ \bbD$. 
Its specific structure may be utilized to model the hydraulic conductivity within heterogeneous and/or fractured media and thus $a$ is considered time-independent.
The advection term in this model is driven by the same random field and inherits the same discontinuous structure as the diffusion, hence we consider the coefficient $b$ as a linear mapping of $a$.

\begin{definition}\label{astein_def:a}
	The \textit{jump-diffusion coefficient} $a$ is defined as
	\begin{equation*}
	a:\Omega\times\bbD\to\R_{>0},\quad (\omega,x)\mapsto\overline a(x)+\Phi(W(\omega,x))+P(\omega,x),
	\end{equation*}
	where
	\begin{itemize}
		\item $\overline a\in C^1(\overline\bbD;\R_{\ge0})$ is non-negative, continuous, and bounded.
		\item $\Phi\in C^1(\R;\R_{>0})$ is a continuously differentiable, positive mapping.
		\item $W\in L^2(\Omega;H)$ is a (zero-mean) Gaussian random field associated to a non-negative, symmetric trace class operator $Q:H\to H$. 
		\item $\calT:\Omega\to\calB(\bbD),\;\omega\mapsto\{\calT_1,\dots,\calT_{\tau}\}$ is a random partition of $\bbD$, i.e. the $\calT_i$ are disjoint open subsets of $\bbD$ such that $|\calT_i|>0$ and $\overline\bbD=\bigcup_{i=1}^\tau\overline\calT_i$, and $\calB(\bbD)$ denotes the Borel-$\sigma$-algebra on $\bbD$.
		The number of elements in $\calT$, $\tau$, is a random variable on $(\Omega,\calA,\bbP)$, i.e. $\tau:\Omega\to\N$. 		
		\item $(P_i, i\in\N)$ is a sequence of non-negative random variables on $(\Omega,\calA,\bbP)$ and
		\begin{equation*}
		P:\Omega\times\bbD\to\R_{\ge0},\quad(\omega,x)\mapsto\sum_{i=1}^{\tau(\omega)}\mathbf 1_{\{\calT_i\}}(x)P_i(\omega).
		\end{equation*}
		The sequence $(P_i, i\in\N)$ is independent of $\tau$ (but not necessarily i.i.d.).
	\end{itemize}
	Based on $a$, the \textit{jump-advection coefficient} $b$ is given for $b_1,b_2\in L^\infty(\bbD)$ by
	\begin{equation*}
	b:\Omega\times\bbD\to\R,\quad (\omega,x)\mapsto \min(b_1(x)a(\omega,x), b_2).
	\end{equation*}
\end{definition}

The definition of the random partition $\calT$ above is rather general and does not yet assume any structure on the discontinuities.   
A more specific class of random partitions is considered in our numerical experiment in Section~\ref{astein_sec:num}.
We assumed in Definition~\ref{astein_def:a} that $\tau$ and $P_i$ are independent due to technical reasons, i.e. to control for a possible sampling bias in $P_i$, see \cite[Theorem 3.11]{BS18b}. On a further note, we do not require stochastic independence of $W$ and $P$. 
In general, our aim is to estimate moments of a \textit{quantity of interest} (QoI) $\Psi (\omega):=\psi(u(\omega,\cdot,\cdot))$ of the weak solution, where $\psi:L^2(\bbT; V)\to\R$ is a deterministic functional.
To ensure existence and a certain regularity of $u$, and therefore of $\Psi$, we fix the following set of assumptions.

\begin{assumption}\label{astein_ass:EV}
	~
	\begin{enumerate}
		\item Let $\eta_1\ge\eta_2\ge\dots\ge0$ denote the eigenvalues of $Q$ in descending order and $(e_i,i\in\N)\subset H$ be the corresponding eigenfunctions. 
		The $e_i$ are continuously differentiable on $ \bbD$ and there exist constants $\alpha,\beta,C_e,C_\eta>0$ such that $2\alpha\le \beta$ and for any $i\in\N$
		\begin{equation*}
		\|e_i\|_{L^\infty( \bbD)}\le C_e,\quad\max_{j=1,\dots,d}\|\partial_{x_j} e_i\|_{L^\infty( \bbD)}\le C_ei^\alpha\quad\text{and}\quad\sum_{i=1}^\infty\eta_ii^\beta\leq C_\eta<+\infty.
		\end{equation*} 
		
		\item Furthermore, the mapping $\Phi$ as in Definition~\ref{astein_def:a} and its derivative are bounded by
		\begin{equation*}
		\phi_1\exp(\phi_2 |w|)\ge \Phi(w)\ge\phi_1\exp(-\phi_2 |w|),\quad|\frac{d}{dx}\Phi(w)|\le\phi_3\exp(\phi_4|w|),\quad w\in\R,
		\end{equation*}
		where $\phi_1,\dots,\phi_4>0$ are arbitrary constants.
		\item For some $p>2$, $f,\partial_t f\in L^p(\Omega;L^2(\bbT;H)), u_0\in L^p(\Omega;H^2(\bbD)\cap V)$ and $u_0$ and $f$ are stochastically independent of $\calT$.
		\item The partition elements $\calT_i$ are almost surely polygons with piecewise linear boundary and $\bbE(\tau^n)<+\infty$ for all $n\in\bbN$.
		\item The sequence $(P_i, i\in\N)$ consists of nonnegative and bounded random variables $P_i\in [0,\overline P]$ for some $\overline P>0$.
		\item The functional $\psi$ is Lipschitz continuous on $L^2(\bbT;H)$, i.e. there exists $C_\psi>0$ such that 
		\begin{equation*} 
		|\psi(v)-\psi(w)|\le C_\psi\|v-w\|_{L^2(\bbT;H)}\quad \forall v,u\in L^2(\bbT;H).
		\end{equation*} 
	\end{enumerate}
\end{assumption}  
 
\begin{remark}
The above assumptions are natural and cannot be relaxed significantly to derive the results in Section~\ref{astein_sec:para_disc}.
The condition $2\alpha\le\beta$ implies that $W$ has almost surely Lipschitz continuous paths on $\bbD$, thus $a$ is piecewise Lipschitz continuous. 
This is in turn necessary to derive the error estimates of orders $\calO(\overline h_\ell^\kappa)$ and $\calO(\overline h_\ell^{2\kappa})$ in Theorem~\ref{astein_thm:semi_error_V} and Theorem~\ref{astein_thm:semi_error_H}, respectively, for some $\kappa\in (1/2,1]$ that is independent of $W$.
The parameter $\overline h_\ell$ denotes the Finite Element (FE) refinement and $\kappa$ should only be influenced by the law of the random jump field $P$.
If any of this assumptions were violated, however, $\kappa$ may depend on other parameters of the random PDE.
For instance, if $\beta/2\alpha<\kappa\le 1$, we would only obtain an error of approximate order $\calO(\overline h_\ell^{\beta/2\alpha})$ in Theorem~\ref{astein_thm:semi_error_V}, see \cite{BS18c} for a detailed discussion.
The remaining points in Assumption~\ref{astein_ass:EV} ensure that all estimates hold in the mean-square sense, i.e. the second moments of all estimates exist and can be bounded with respect to $\overline h_\ell$. 
\end{remark}

We have the following estimate on $a$ and its piecewise Lipschitz norm.

\begin{lemma}\label{astein_lem:a}\cite[Lemmas 3.6 and 4.8]{BS18c}
	Let Assumption~\ref{astein_ass:EV} hold and define $a_{-}(\omega):=\text{ess inf}_{x\in \bbD} a(\omega,x)$ and $a_{+}(\omega):=\text{ess sup}_{x\in \bbD} a(\omega,x)$. Then, for any $q\in[1,\infty)$
	\begin{equation*}
	1/a_-,\;a_+,\;\max_{i=1,\dots,\tau}\sum_{j=1}^d\|\partial_{x_j}a\|_{L^\infty( \calT_i)}\in L^q(\Omega;\R).
	\end{equation*}
\end{lemma}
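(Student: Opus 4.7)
My plan is to reduce each of the three quantities to one controlled by $\|W\|_{L^\infty(\bbD)}$ and $\max_{j}\|\partial_{x_j}W\|_{L^\infty(\bbD)}$, and then invoke the Karhunen--Lo\`eve expansion of $W$ together with Fernique's theorem to obtain the required moment bounds.

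For $1/a_-$ and $a_+$, I would exploit nonnegativity of $\overline a$ and $P$ together with the envelope bounds on $\Phi$ from Assumption~\ref{astein_ass:EV}.(2). The lower bound $\Phi(w)\ge\phi_1\exp(-\phi_2|w|)$ yields
\[
a(\omega,x)\ge\phi_1\exp\bigl(-\phi_2\|W(\omega,\cdot)\|_{L^\infty(\bbD)}\bigr),
\]
hence $1/a_-(\omega)\le\phi_1^{-1}\exp(\phi_2\|W(\omega,\cdot)\|_{L^\infty(\bbD)})$. The matching upper bound for $\Phi$, combined with $\overline a\in L^\infty(\bbD)$ and $P_i\le\overline P$, gives
\[
a_+(\omega)\le\|\overline a\|_{L^\infty(\bbD)}+\phi_1\exp\bigl(\phi_2\|W(\omega,\cdot)\|_{L^\infty(\bbD)}\bigr)+\overline P.
\]
So $L^q(\Omega)$-integrability of these two quantities follows once I know that $\exp(c\|W\|_{L^\infty(\bbD)})\in L^q(\Omega)$ for every $c,q\in(0,\infty)$.

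For the third quantity, the crucial observation is that $P(\omega,\cdot)$ is piecewise constant on $\calT(\omega)$, so its classical gradient vanishes on each $\calT_i$. Thus on $\calT_i$
\[
\partial_{x_j}a(\omega,x)=\partial_{x_j}\overline a(x)+\Phi'(W(\omega,x))\,\partial_{x_j}W(\omega,x),
\]
and the right-hand side is \emph{independent of $i$}. Consequently,
\[
\max_{i=1,\dots,\tau}\sum_{j=1}^d\|\partial_{x_j}a\|_{L^\infty(\calT_i)}\le\sum_{j=1}^d\Bigl(\|\partial_{x_j}\overline a\|_{L^\infty(\bbD)}+\|\Phi'(W)\|_{L^\infty(\bbD)}\|\partial_{x_j}W\|_{L^\infty(\bbD)}\Bigr),
\]
a $\tau$-free bound. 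Using $|\Phi'(w)|\le\phi_3\exp(\phi_4|w|)$ the factor $\|\Phi'(W)\|_{L^\infty}$ is again exponential in $\|W\|_{L^\infty}$, so once exponential moments of $\|W\|_{L^\infty}$ and polynomial moments of $\|\partial_{x_j}W\|_{L^\infty}$ are available, a single H\"older step closes the argument.

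The technical heart is therefore to show that $W$ and each $\partial_{x_j}W$ almost surely take values in the separable Banach space $C(\overline\bbD)$ with all exponential moments of the sup-norm. I would use the KL representation $W=\sum_i\sqrt{\eta_i}\,\xi_i\,e_i$ with i.i.d.\ standard normal $\xi_i$. Assumption~\ref{astein_ass:EV}.(1) yields
\[
\sum_{i}\eta_i\|e_i\|_{L^\infty(\bbD)}^2\le C_e^2\sum_i\eta_i\le C_e^2 C_\eta,\qquad\sum_{i}\eta_i\|\partial_{x_j}e_i\|_{L^\infty(\bbD)}^2\le C_e^2\sum_i\eta_i\, i^{2\alpha}\le C_e^2 C_\eta,
\]
where in the second inequality the compatibility condition $2\alpha\le\beta$ is essential. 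Combined with a standard Gaussian-chaining argument, this yields almost sure uniform convergence of the KL series and of its termwise derivative, after which Fernique's theorem produces $\bbE[\exp(\lambda\|W\|_{L^\infty}^2)]<\infty$ and $\bbE[\exp(\lambda\|\partial_{x_j}W\|_{L^\infty}^2)]<\infty$ for some $\lambda>0$, which is more than enough to close the three estimates above. The main obstacle I expect is precisely this last step: Fernique is being applied in the $C(\overline\bbD)$ setting rather than in the Hilbert space $H$, so the regularity of $W$ as a $C(\overline\bbD)$-valued Gaussian variable---not just an $H$-valued one---must be justified; this is exactly what the coupling $2\alpha\le\beta$ together with $\sum_i\eta_i i^\beta<\infty$ provides.
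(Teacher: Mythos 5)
The paper does not actually prove this lemma; it imports it verbatim from \cite[Lemmas 3.6 and 4.8]{BS18c}, so there is no in-paper argument to compare against. That said, your three reductions are exactly the standard ones and are correct as far as they go: nonnegativity of $\overline a$ and $P$ plus the lower envelope on $\Phi$ gives $1/a_-\le\phi_1^{-1}\exp(\phi_2\|W\|_{L^\infty(\bbD)})$, boundedness of $\overline a$ and $P_i\le\overline P$ gives the matching bound on $a_+$, and the observation that $P$ is constant on each open $\calT_i$ --- so that $\partial_{x_j}a=\partial_{x_j}\overline a+\Phi'(W)\partial_{x_j}W$ on $\calT_i$ with a bound independent of $i$ and of $\tau$ --- is precisely the right way to neutralize the $\max_{i=1,\dots,\tau}$. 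After these reductions everything indeed hinges on exponential moments of $\|W\|_{L^\infty(\bbD)}$ and polynomial moments of $\|\partial_{x_j}W\|_{L^\infty(\bbD)}$.

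The gap is in how you claim those moments. For $W$ itself the route is fine in spirit, but the correct mechanism is not the bound $\sum_i\eta_i\|e_i\|^2_{L^\infty(\bbD)}<\infty$ (which only controls the pointwise variance); one should bound the canonical metric, $\bbE[(W(x)-W(y))^2]\le C\|x-y\|_2^2\sum_i\eta_i i^{2\alpha}\le C\|x-y\|_2^2$ using $|e_i(x)-e_i(y)|\le C_e i^\alpha\|x-y\|_2$ and $2\alpha\le\beta$, then apply Kolmogorov/Dudley to get an a.s. continuous modification and only then Fernique in $C(\overline\bbD)$. For the derivative field the problem is more serious: $\sum_i\eta_i\|\partial_{x_j}e_i\|^2_{L^\infty(\bbD)}<\infty$ again controls only the pointwise variance of $\sum_i\sqrt{\eta_i}\xi_i\partial_{x_j}e_i$, and ``a standard Gaussian-chaining argument'' has nothing to chain with, since Assumption~\ref{astein_ass:EV} gives no modulus of continuity for $\partial_{x_j}e_i$ (no bounds on second derivatives or H\"older seminorms). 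Pointwise variance bounds alone do not imply a.s. sup-norm boundedness of a Gaussian series, and Fernique cannot be invoked before a.s. boundedness in $C(\overline\bbD)$ is established --- that is the nontrivial content you are outsourcing. The crude alternative, $\|\partial_{x_j}W\|_{L^\infty(\bbD)}\le C_e\sum_i\sqrt{\eta_i}\,i^\alpha|\xi_i|$ followed by the triangle inequality in $L^q(\Omega)$, requires $\sum_i\sqrt{\eta_i}\,i^\alpha<\infty$, i.e. essentially $\beta>2\alpha+1$, which is strictly stronger than the stated $2\alpha\le\beta$. So the moment bound on the piecewise Lipschitz norm --- precisely the part delegated to \cite[Lemma 4.8]{BS18c} --- is not yet proved by your sketch and needs either the sharper summability condition or an argument exploiting additional structure of the eigenfunctions.
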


\begin{theorem}\label{astein_thm:exis}
	Under Assumption~\ref{astein_ass:EV} there exists almost surely a unique path-wise weak solution $u(\omega,\cdot,\cdot)\in L^2(\bbT;V)$ to Problem~\eqref{astein_eq:pde} satisfying the estimate
	\begin{equation}\label{astein_eq:e_norm_pw}
	\sup_{t\in\bbT} \|u(\omega,\cdot,\cdot)\|^2_{*,t}\le C/a_-(\omega)\Big(\|u_0(\omega,\cdot)\|^2_H+\|f(\omega,\cdot,\cdot)\|^2_{L^2(T;H)}\Big)<+\infty.
	\end{equation}
	In addition, for any $r\in[1,p)$ (with $p$ as in Ass.~\ref{astein_ass:EV}), $u$ is bounded in expectation by
	\begin{equation}\label{astein_eq:e_norm_exp}
	\bbE\Big(\sup_{t\in\bbT} \|u\|^r_{*,t}\Big)^{1/r}\le C\|1/a_-\|_{L^{\widetilde q}(\Omega;\R)}\left(\|u_0\|_{L^p(\Omega;H)}+\|f\|_{L^p(\Omega;L^2(\bbT;V'))}\right)<+\infty.
	\end{equation}
	with $C=C(r)$ and $\widetilde q:=(1/r-1/p)^{-1}$. Furthermore, it holds $\Psi\in L^r(\Omega;\R)$.
\end{theorem}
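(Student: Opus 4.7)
The plan is to establish the theorem in three path-wise stages (construction, energy bound, lift to moments), following the standard Lions--Galerkin template for parabolic equations with a G\aa{}rding-coercive bilinear form. Fix $\omega$ outside a $\bbP$-null set. Lemma~\ref{astein_lem:a} yields $0<a_-(\omega)\le a(\omega,\cdot)\le a_+(\omega)<+\infty$ almost everywhere on $\bbD$, and the clipping $b=\min(b_1a,b_2)$ with $b_1\in L^\infty(\bbD)$ gives the path-wise bound $\|b(\omega,\cdot)\|_{L^\infty(\bbD)}\le\|b_1\|_{L^\infty(\bbD)}a_+(\omega)+|b_2|=:M(\omega)$. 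Hence $B_\omega$ is bounded on $V\times V$, and absorbing the drift by Young's inequality delivers a G\aa{}rding estimate
\begin{equation*}
B_\omega(v,v)\ge\tfrac{a_-(\omega)}{2}\|\nabla v\|_H^2-\gamma(\omega)\|v\|_H^2,\qquad v\in V,
\end{equation*}
with $\gamma(\omega)$ polynomial in $M(\omega)$ and $a_-(\omega)^{-1}$. Combined with $f(\omega,\cdot,\cdot)\in L^2(\bbT;H)\subset L^2(\bbT;V')$ and $u_0(\omega,\cdot)\in H$ (Assumption~\ref{astein_ass:EV}), the classical parabolic existence theory then produces a unique $u(\omega,\cdot,\cdot)\in L^2(\bbT;V)\cap C(\bbT;H)$ with $\partial_tu(\omega,\cdot,\cdot)\in L^2(\bbT;V')$ solving~\eqref{astein_eq:var}.

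For the path-wise energy bound~\eqref{astein_eq:e_norm_pw}, I test~\eqref{astein_eq:var} against $v=u(\omega,\cdot,t)$ and use the G\aa{}rding estimate together with Cauchy--Schwarz and Poincar\'e on the source term to arrive at the differential inequality
\begin{equation*}
\tfrac{d}{dt}\|u(\omega,\cdot,t)\|_H^2+a_-(\omega)\|\nabla u(\omega,\cdot,t)\|_H^2\le 2\gamma(\omega)\|u(\omega,\cdot,t)\|_H^2+\tfrac{C}{a_-(\omega)}\|f(\omega,\cdot,t)\|_H^2.
\end{equation*}
Integration over $[0,t]$ and Gronwall's inequality then yield~\eqref{astein_eq:e_norm_pw}, with a constant depending on $\omega$ only through $a_\pm(\omega)$.

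To pass to expectations, I raise~\eqref{astein_eq:e_norm_pw} to the power $r/2$ and apply H\"older's inequality with conjugate exponents $\widetilde q/r$ and $p/r$ (so that $1/\widetilde q+1/p=1/r$) to separate the random prefactor $(a_-(\omega))^{-r/2}$ from the data norms. Lemma~\ref{astein_lem:a} supplies the required moments of $1/a_-$ (and of any $a_+$-factors hidden in the constant), while the independence of $(u_0,f)$ from the random partition $\calT$ in Assumption~\ref{astein_ass:EV}.(3) allows the expectation to factor cleanly; this produces~\eqref{astein_eq:e_norm_exp}. Finally, Assumption~\ref{astein_ass:EV}.(6) together with $\|u(\omega,\cdot,\cdot)\|_{L^2(\bbT;H)}\le T^{1/2}\sup_{t\in\bbT}\|u(\omega,\cdot,\cdot)\|_{*,t}$ gives the bound $|\Psi(\omega)|\le|\psi(0)|+C_\psi T^{1/2}\sup_{t\in\bbT}\|u(\omega,\cdot,\cdot)\|_{*,t}$, so $\Psi\in L^r(\Omega;\R)$ follows immediately from~\eqref{astein_eq:e_norm_exp}.

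The delicate step is keeping the $\omega$-dependence of the Gronwall constant in the path-wise energy bound \emph{polynomial} in $a_-(\omega)^{-1}$ and $a_+(\omega)$: a naive treatment of the drift produces a Gronwall factor of the form $\exp(CT/a_-(\omega))$ which, since $1/a_-$ has only Gaussian-type exponential tails coming from $\Phi(W)$, would fail to be $r$-integrable. Careful use of Poincar\'e's inequality (and, if needed, integration by parts exploiting the piecewise-Lipschitz structure of $b$ guaranteed by Lemma~\ref{astein_lem:a}) is what keeps the constant moment-integrable and lets the all-order moment bounds of Lemma~\ref{astein_lem:a} kick in during the H\"older step.
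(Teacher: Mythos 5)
The part of the theorem the paper actually proves is only the final claim $\Psi\in L^r(\Omega;\R)$, via the linear growth bound $|\psi(v)|\le C(1+\|v\|_{L^2(\bbT;H)})$ implied by Assumption~\ref{astein_ass:EV}.6 combined with \eqref{astein_eq:e_norm_exp}; your last step is the same argument. For the existence statement and the two energy estimates \eqref{astein_eq:e_norm_pw}--\eqref{astein_eq:e_norm_exp}, the paper simply cites \cite[Theorem 3.7]{BS18c}, whereas you attempt a self-contained Lions--Galerkin proof. That is a legitimate and more informative route in principle, but as written it has a genuine gap at exactly the step you yourself flag as delicate.

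Concretely: testing with $v=u(\omega,\cdot,t)$ and absorbing the drift by Young's inequality, $|\int_\bbD b\,\mathbf 1^T\nabla u\,u\,dx|\le \frac{a_-(\omega)}{4}\|\nabla u\|_H^2+\frac{d\,\|b(\omega,\cdot)\|_{L^\infty(\bbD)}^2}{a_-(\omega)}\|u\|_H^2$, forces a Gronwall factor $\exp\bigl(CT\,\|b(\omega,\cdot)\|_{L^\infty(\bbD)}^2/a_-(\omega)\bigr)$. Since $\|b(\omega,\cdot)\|_{L^\infty(\bbD)}\le\|b_1\|_{L^\infty(\bbD)}a_+(\omega)+\|b_2\|_{L^\infty(\bbD)}$ and $a_+$, $1/a_-$ are of lognormal type (they involve $\exp(\sup_{x}|W(\omega,x)|)$), this factor is a double exponential of a Gaussian supremum and has \emph{no} finite moments of any order; Lemma~\ref{astein_lem:a} cannot rescue it. You acknowledge this and assert that ``careful use of Poincar\'e's inequality (and, if needed, integration by parts)'' keeps the Gronwall constant polynomial, but you never exhibit the estimate. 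Poincar\'e does not remove the $\|b\|_{L^\infty}^2/a_-$ ratio, and integrating $\int_\bbD b\,\mathbf 1^T\nabla(u^2/2)\,dx$ by parts is nontrivial here because $b$ is only piecewise Lipschitz, so its distributional divergence carries surface terms on the random interfaces. Moreover, the target inequality \eqref{astein_eq:e_norm_pw} has the very specific form $C/a_-(\omega)$ with a \emph{deterministic} $C$ and no $a_+$-dependence at all, which your sketch does not deliver even in the best case. This missing absorption argument is precisely the content of \cite[Theorem 3.7]{BS18c} (it exploits the coupling $b=\min(b_1a,b_2)$ and the diffusion-dominated structure), so the proof is incomplete without it. The final H\"older step with exponents $\widetilde q/r$ and $p/r$ and the $\Psi\in L^r$ argument are fine; the appeal to independence of $(u_0,f)$ from $\calT$ is unnecessary (and would not suffice anyway, since $a_-$ also depends on $W$), as H\"older alone gives \eqref{astein_eq:e_norm_exp}.
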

\begin{proof}
	The estimates in Ineq.~\eqref{astein_eq:e_norm_pw} and \eqref{astein_eq:e_norm_exp} follow from \cite[Theorem 3.7]{BS18c}. 
	To show that $\Psi\in L^r(\Omega;\R)$, we use Assumption~\ref{astein_ass:EV} to see that $\psi$ fulfills the linear growth condition $|\psi(v)|\le C(1+\|v\|_{L^2(\bbT;H)})$ for some deterministic constant $C=C(\psi)>0$ and all $v\in L^2(\bbT;H)$. Hence, we have
	\begin{equation*}
	\bbE(\Psi^r)\le \bbE\Big(C^r(1+\|u\|_{L^2(\bbT;V)})^r\Big)\le C^r 2^{r-1}\Big(1+\bbE\Big(\sup_{t\in\bbT}\|u\|^r_{*,t}\Big)\Big)<+\infty.	
	\end{equation*}
\end{proof}

\section{Numerical Approximation of the Solution}\label{astein_sec:para_disc}

In general, the (exact) weak solution $u$ to Problem~\eqref{astein_eq:pde} is out of reach and we have to find tractable approximations of $u$ to apply Monte Carlo algorithms for the estimation of $\bbE(\Psi)$. A common approach is to use a FE discretization of $V$ combined with a time marching scheme to sample path-wise approximations of $u$. For this, however, it is necessary to evaluate $a$ and $b$ at certain points in $\bbD$. This is in general infeasible, since the Gaussian field $W$ usually involves an infinite series and/or the jump heights $P_i$ might not be sampled without bias. The latter issue may arise if $P_i$ has non-standard law, e.g. the \textit{generalized inverse Gaussian} distribution, for more details we refer to \cite{BS18b, BS18c}.
We may circumvent this issue by constructing suitable approximations of $a$ and $b$, for instance by truncated Karhunen-Lo{\`e}ve expansions (\cite{C12, CST13}), circulant embedding methods (\cite{Gr18a, LP11}) or Fourier inversion techniques for the sampling of $P_i$ (\cite{BS18a, BS18b}).
Hence, we obtain a modified problem with approximated coefficients which may then be discretized in the spatial and temporal domain. 
To increase the order of convergence in the spatial discretization, we introduce a FE scheme in the second part of this section where we choose the FE grids adapted with respect to the discontinuities in each sample of $a$ and $b$.    
Under mild assumptions on the coefficients we then derive errors on the semi- and fully discrete approximations of $u$.  

\subsection{Approximated Diffusion Coefficients}
As discussed above, there are several methods available to obtain tractable approximations of the diffusion coefficient $a$, thus we consider a rather general setting here. 
For some $\epsilon>0$, let $a_\epsilon:\Omega\times \bbD\to\R_{>0}$ be an arbitrary approximation of the diffusion coefficient and let (according to Definition~\ref{astein_def:a}) 
\begin{equation*}
b_\epsilon:\Omega\times \bbD\to\R,\quad (\omega,x)\mapsto \min(b_1(x)a_\epsilon(\omega,x),b_2(x)),
\end{equation*}
be the canonical approximation of $b$.
Substituting $a_\epsilon$ and $b_\epsilon$ into Problem~\eqref{astein_eq:pde} yields  
\begin{align}\label{astein_eq:pde_approx}
\begin{split}
\partial_t u_\epsilon(\omega,x,t)+[L_\epsilon u_\epsilon](\omega,x,t)&=f(\omega,x,t)\quad\text{in $\Omega\times \bbD\times (0,T]$},\\
u_\epsilon(\omega,x,0)&=u_0(\omega,x)\quad\text{in $\Omega\times \bbD\times\{0\}$}\\
u_\epsilon(\omega,x,t)&=0\quad\text{on $\Omega\times\partial \bbD\times\bbT$},
\end{split}
\end{align}
where the approximated second order differential operator $L_\epsilon $ is given by 
\begin{equation*}
[L_\epsilon u](\omega,x,t)=-\nabla\cdot \left(a_\epsilon(\omega,x)\nabla u(\omega,x,t)\right)+b_\epsilon(\omega,x)\mathbf 1^T\nabla u(\omega,x,t).
\end{equation*}
The path-wise variational formulation of Eq.~\eqref{astein_eq:pde_approx} is then (analogous to Eq.~\eqref{astein_eq:var}) given by: 
For almost all $\omega\in\Omega$ with given $f(\omega,\cdot,\cdot)$, find $u_\epsilon(\omega,\cdot,\cdot)\in L^2(\bbT;V)$ with $\partial_t u(\omega,\cdot,\cdot)\in L^2(\bbT;V')$ such that, for $t\in\bbT$,
\begin{equation}\label{astein_eq:var_approx}
\dualpair{V'}{V}{\partial_t u_\epsilon(\omega,\cdot,t)}{v}+B_{\epsilon,\omega}(u_\epsilon(\omega,\cdot,t),v)=F_{\omega,t}(v),
\end{equation}
holds for all $v\in V$ with respect to the approximated bilinear form 
\begin{align*}
B_{\epsilon,\omega}(v,w):=\int_ \bbD a_\epsilon(\omega,x)\nabla v(x)\cdot\nabla w(x)+b_\epsilon(\omega,x)\mathbf 1^T\nabla v(x)w(x)dx,\quad v,w\in V.
\end{align*}

The following assumption guarantees existence and uniqueness of $u_\epsilon$ and allows us to bound $u-u_\epsilon$ in a mean-square sense.

\begin{assumption}\label{astein_ass:ahat}
	Let Assumption~\ref{astein_ass:EV} hold and let $a_\epsilon:\Omega\times \bbD\to\R_{>0}$ be an approximation of $a$ for some fixed $\epsilon>0$.
	Define $a_{\epsilon,-}(\omega):=\text{ess inf} a_\epsilon(\omega,x)$ and $a_{\epsilon,+}(\omega):=\text{ess sup}_{x\in \bbD} a_\epsilon(\omega,x)$.
	Assume that for some $s>(1/2-1/p)^{-1}$ and any $q\in[1,\infty)$, there are constants $C_i>0$, for $i=1,\dots,4$, independent of $\epsilon$, such that 
	\begin{itemize}
		\item $\|a-a_\epsilon\|_{L^s(\Omega;L^\infty( \bbD))}\le C_1\epsilon$,
		\item $\|1/a_{\epsilon,-}\|_{L^q(\Omega;\R)}\le C_2\|1/a_-\|_{L^q(\Omega;\R)}<+\infty$,
		\item $\|a_{\epsilon,+}\|_{L^q(\Omega;\R)}\le C_3\|a_+\|_{L^q(\Omega;\R)}<+\infty$\quad and
		\item $\|\max\limits_{i=1,\dots,\tau}\sum_{j=1}^d\|\partial_{x_j}a_\epsilon\|_{L^\infty( \calT_i)}\|_{L^q(\Omega;\R)}\le C_4\|\max\limits_{i=1,\dots,\tau}\sum_{j=1}^d\|\partial_{x_j}a\|_{L^\infty( \calT_i)}\|_{L^q(\Omega;\R)}<+\infty$.
	\end{itemize}
\end{assumption}

At this point we remark that Assumption~\ref{astein_ass:ahat} is natural and essentially states that $a_\epsilon$ has the same regularity as $a$.
Furthermore, the moments of $a-a_\epsilon$ are controlled by the parameter $\epsilon$ and we may achieve an arbitrary good approximation by choosing $\epsilon$ sufficiently small. 
This holds for instance (with $C_2=C_3=C_4=1$) if $W$ is approximated by a truncated Karhunen-Lo\`eve expansion (see \cite{BS18b, BS18c}) or if $a_\epsilon$ stems from linear interpolation of discrete sample points of $W$ as we explain in Section~\ref{astein_sec:num}.
\begin{theorem}\label{astein_thm:a_error_V}
	Let Assumption~\ref{astein_ass:ahat} hold and let $u_\epsilon$ be the weak solution to Problem~\eqref{astein_eq:pde_approx}.
	Then, the root-mean-squared approximation error is bounded by
	\begin{equation*}
	\bbE\Big(\sup_{t\in\bbT}\|u(\cdot,\cdot,t)-u_\epsilon(\cdot,\cdot,t)\|^2_{*,t}\Big)^{1/2}\le C\epsilon.
	\end{equation*}
\end{theorem}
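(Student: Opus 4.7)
The plan is a textbook energy argument for the error $e(\omega,\cdot,\cdot):=u(\omega,\cdot,\cdot)-u_\epsilon(\omega,\cdot,\cdot)$, followed by a H\"older split in $\omega$. First I would subtract the variational formulations \eqref{astein_eq:var} and \eqref{astein_eq:var_approx} and regroup via $B_\omega(u,v)-B_{\epsilon,\omega}(u_\epsilon,v)=B_{\epsilon,\omega}(e,v)+(B_\omega-B_{\epsilon,\omega})(u,v)$ to obtain, for almost every $\omega\in\Omega$ and every $t\in\bbT$,
\begin{equation*}
\dualpair{V'}{V}{\partial_t e(\omega,\cdot,t)}{v}+B_{\epsilon,\omega}(e(\omega,\cdot,t),v)=-(B_\omega-B_{\epsilon,\omega})(u(\omega,\cdot,t),v),\quad v\in V,
\end{equation*}
with the initial value $e(\omega,\cdot,0)=0$ since $u$ and $u_\epsilon$ share the initial datum $u_0$.

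Next I would test with $v=e(\omega,\cdot,t)$, invoke the Gelfand-triple identity $\dualpair{V'}{V}{\partial_t e}{e}=\tfrac{1}{2}\tfrac{d}{dt}\|e\|_H^2$, and use the coercivity bound $\int_\bbD a_\epsilon|\nabla e|^2 dx\ge a_{\epsilon,-}(\omega)\|\nabla e\|_H^2$. The two advective terms, namely the left-hand $\int_\bbD b_\epsilon\mathbf{1}^T\nabla e\cdot e\,dx$ and the right-hand $\int_\bbD(b-b_\epsilon)\mathbf{1}^T\nabla u\cdot e\,dx$, are handled by Cauchy--Schwarz and Young's inequality; absorbing part of the coercivity into the $\|\nabla e\|_H^2$-terms and invoking the $1$-Lipschitz property of $\min$ to get $|b-b_\epsilon|\le\|b_1\|_{L^\infty(\bbD)}|a-a_\epsilon|$ gives, after integration in time from $e(\omega,\cdot,0)=0$ and an application of Gr\"onwall's lemma, a path-wise estimate
\begin{equation*}
\sup_{t\in\bbT}\|e(\omega,\cdot,\cdot)\|_{*,t}^2\le K(\omega)\|a(\omega,\cdot)-a_\epsilon(\omega,\cdot)\|_{L^\infty(\bbD)}^2\sup_{t\in\bbT}\|u(\omega,\cdot,\cdot)\|_{*,t}^2,
\end{equation*}
where the random factor $K(\omega)$ depends only on $T$, on $1/a_{\epsilon,-}(\omega)$, on the deterministic $\|b_1\|_{L^\infty(\bbD)}$, and on $\|b_\epsilon\|_{L^\infty(\bbD)}\le\|b_2\|_{L^\infty(\bbD)}$ (deterministic through the $\min$ in Definition~\ref{astein_def:a}). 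Lemma~\ref{astein_lem:a} together with Assumption~\ref{astein_ass:ahat} then ensures $K\in L^q(\Omega;\R)$ for every $q\in[1,\infty)$.

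Finally, taking square roots and expectation and applying H\"older's inequality with three factors and conjugate exponents $\tfrac{2}{s}+\tfrac{1}{q_1}+\tfrac{2}{r}=1$ for some $r\in(2,p)$ yields
\begin{equation*}
\bbE\Big(\sup_{t\in\bbT}\|e\|_{*,t}^2\Big)^{1/2}\le\|a-a_\epsilon\|_{L^s(\Omega;L^\infty(\bbD))}\,\|K\|_{L^{q_1}(\Omega;\R)}^{1/2}\,\Big\|\sup_{t\in\bbT}\|u\|_{*,t}\Big\|_{L^r(\Omega;\R)}.
\end{equation*}
The first factor is $\le C_1\epsilon$ by Assumption~\ref{astein_ass:ahat}, the third factor is finite by Theorem~\ref{astein_thm:exis}, and the hypothesis $s>(1/2-1/p)^{-1}$, equivalently $2/s+2/p<1$, is exactly what leaves the positive budget $1/q_1=1-2/s-2/r>0$ for the middle factor, which is then finite for any finite $q_1$ by the previous step.

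The main obstacle, and simultaneously the decisive structural input, is keeping the random factor $K(\omega)$ controllable in $L^q(\Omega;\R)$ despite the path-wise dependence of the coercivity constant $a_{\epsilon,-}$ on $\omega$; the deterministic ceiling $\|b_\epsilon\|_{L^\infty(\bbD)}\le\|b_2\|_{L^\infty(\bbD)}$ built into Definition~\ref{astein_def:a} and the Lipschitz coupling $|b-b_\epsilon|\le\|b_1\|_{L^\infty(\bbD)}|a-a_\epsilon|$ are exactly the structural features that prevent the Gr\"onwall constant from being driven by heavy-tailed random quantities and that let a single $\|a-a_\epsilon\|_{L^\infty(\bbD)}^2$-factor suffice on the right-hand side.
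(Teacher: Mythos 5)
Your overall architecture --- derive a parabolic equation for the error $e=u-u_\epsilon$ with zero initial datum and a right-hand side controlled by $\|a-a_\epsilon\|_{L^\infty(\bbD)}$ times the gradient of the solution, then apply a path-wise stability bound and finish with a H\"older split in $\omega$ --- is exactly the architecture of the paper's proof, and your exponent bookkeeping ($2/s+1/q_1+2/r=1$ with $r<p$, made possible by $s>(1/2-1/p)^{-1}$) is equivalent to the paper's two-stage H\"older argument. The substantive difference is that you re-derive the path-wise energy estimate from scratch, and this is where there is a genuine gap. Testing with $e$, splitting the advective term $\int_\bbD b_\epsilon\mathbf 1^T\nabla e\,e\,dx$ by Cauchy--Schwarz and Young, absorbing into the coercivity $a_{\epsilon,-}(\omega)\|\nabla e\|_H^2$ and then applying Gr\"onwall produces a multiplicative factor of the form $\exp\bigl(C\,T\,d\,\|b_2\|_{L^\infty(\bbD)}^2/a_{\epsilon,-}(\omega)\bigr)$. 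Your $K(\omega)$ therefore depends \emph{exponentially}, not polynomially, on $1/a_{\epsilon,-}(\omega)$, and Lemma~\ref{astein_lem:a} together with Assumption~\ref{astein_ass:ahat} only controls the polynomial moments of $1/a_{\epsilon,-}$, not its exponential moments. For the canonical choice $\Phi=\exp$ with $\overline a\equiv 0$ (precisely the setting of the numerical section), $1/a_{\epsilon,-}$ is itself comparable to the exponential of a Gaussian supremum, so the Gr\"onwall factor is a double exponential with infinite expectation. The assertion ``$K\in L^q(\Omega;\R)$ for every $q$'' thus does not follow from the route you describe, and the final H\"older step collapses.

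The paper sidesteps this entirely: it writes the error equation with the original bilinear form $B_\omega$ and right-hand side $\widetilde f=\nabla\cdot((a_\epsilon-a)\nabla u_\epsilon)+(b_\epsilon-b)\mathbf 1^T\nabla u_\epsilon\in V'$, bounds $\|\widetilde f\|_{L^2(\bbT;V')}$ by $C(1+\|b_1\|_{L^\infty(\bbD)})\,\|a-a_\epsilon\|_{L^\infty(\bbD)}$ times the $L^2(\bbT;H)$-norm of the gradient of the solution (using the same $1$-Lipschitz property of $\min$ that you use), and then invokes the already-established stability estimate of Theorem~\ref{astein_thm:exis}, i.e.\ \cite[Theorem 3.7]{BS18c}, whose path-wise constant is $C/a_-(\omega)$ with $C$ deterministic --- only a polynomial in $1/a_-$. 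To repair your argument, either cite that stability estimate rather than re-proving it, or show that the advection term can be absorbed without a Gr\"onwall exponent proportional to $1/a_{\epsilon,-}(\omega)$; as written, your Young-inequality treatment does not achieve this.
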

\begin{proof}
	By Theorem~\ref{astein_thm:exis}, we have existence of unique solutions $u$ and $u_\epsilon$ to Eqs.~\eqref{astein_eq:var} resp.~\eqref{astein_eq:var_approx} almost surely. Thus, we obtain the variational problem: Find $u-u_\epsilon$ such that 
	\begin{align*}
	\dualpair{V'}{V}{\partial_t (u(\omega,\cdot,t)-u_\epsilon(\omega,\cdot,t))}{v}+B_\omega(u(\omega,\cdot,t)-u_\epsilon(\omega,\cdot,t),v)
	=\dualpair{V'}{V}{\widetilde f(\omega,\cdot,t)}{v}
	\end{align*}
	for all $t\in\bbT$ and  $v\in V$ with initial condition $(u-u_\epsilon)(\cdot,\cdot,0)\equiv0$ and right hand side 
	\begin{align*}
	\widetilde f(\omega,\cdot,t):=\nabla\cdot((a_\epsilon-a)(\omega,\cdot)\nabla u_\epsilon(\omega,\cdot,t))+(b_\epsilon-b)(\omega,\cdot)\mathbf 1^T\nabla u_\epsilon(\omega,\cdot,t)\in V'.
	\end{align*}
	By H\"older's inequality it holds
	\begin{align*}
	\|\widetilde f(\omega,\cdot,\cdot)\|_{L^2(\bbT;V')}
	&\le \|(a-a_\epsilon)(\omega,\cdot)\|_{L^\infty( \bbD)}\|\|\nabla u(\omega,\cdot,\cdot)\|_2\|_{L^2(\bbT;H)}\\
	&\quad+\|(b-b_\epsilon)(\omega,\cdot)\|_{L^\infty( \bbD)}\|\mathbf 1^T\nabla u(\omega,\cdot,\cdot)\|_{L^2(\bbT;H)}\\
	&\le C(1+\|b_1\|_{L^\infty(\bbD)})\|(a-a_\epsilon)(\omega,\cdot)\|_{L^\infty( \bbD)}\|\|\nabla u(\omega,\cdot,\cdot)\|_2\|_{L^2(\bbT;H)},
	\end{align*}
	which yields using Assumption~\ref{astein_ass:ahat} and Theorem~\ref{astein_thm:exis}
	\begin{align*}
	\|\widetilde f(\omega,\cdot,\cdot)\|_{L^{p_1}(\Omega;L^2(\bbT;V'))}
	&\le C(1+\|b_1\|_{L^\infty(\bbD)})\|(a-a_\epsilon)\|_{L^s(\Omega;L^\infty( \bbD))}\bbE\Big(\sup_{t\in\bbT}\|u\|_{*,t}^r\Big)^{1/r}\\
	&\le C\epsilon
	\end{align*}
	for $r\in((1/2-1/s)^{-1},p)$ and $p_1:=(1/s+1/r)^{-1}>2$. 
	We may now use Theorem~\ref{astein_thm:exis} with $q=(1/2-1/p_1)^{-1}$ to estimate $u-u_\epsilon$ via
	\begin{equation*}
	\bbE\Big(\sup_{t\in\bbT} \|u-u_\epsilon\|^2_{*,t}\Big)^{1/2}\le C\|1/a_-\|_{L^q(\Omega;\R)}\|\widetilde f\|_{L^{p_1}(\Omega;L^2(\bbT;V'))}\le C\epsilon.
	\end{equation*}
\end{proof}

\subsection{Semi-Discretization by Adaptive Finite Elements}
Given a suitable approximation $a_\epsilon $ of the diffusion coefficient, we discretize the (approximate) solution $u_\epsilon$ in the spatial domain. As a first step, we replace the (infinite-dimensional) solution space $V$ by a sequence $\bbV=(V_\ell, \ell\in\N_0)$ of finite dimensional subspaces $V_\ell\subset V$.
In general, $V_\ell$ are standard FE spaces of piecewise linear functions with respect to some given triangulation $\rmK_\ell$ of $ \bbD$ and $h_\ell$ represents the maximum diameter of $\rmK_\ell$. 
As indicated in \cite{BS18b, BS18c} using standard FE spaces will not yield the full order of convergence with respect to $h_\ell$ due to the discontinuities in $a_\epsilon$ and $b_\epsilon$. Thus, we follow the same approach as in \cite{BS18b} for Problem~\eqref{astein_eq:var_approx}
and utilize path-dependent meshes to match the interfaces generated by the jump-diffusion and -advection coefficients. 
As this entails changing varying approximation spaces $V_\ell$  with each sample of $a_\epsilon$ resp. $b_\epsilon$, we have to formulate a semi-discrete version of problem~\eqref{astein_eq:var_approx} with respect to $\omega\in \Omega$:

Given a fixed $\omega\in\Omega$ and $\ell\in\N_0$, we consider a (stochastic) finite dimensional subspace $V_\ell(\omega)\subset V$ with sample-dependent basis $\{v_1(\omega),\dots,v_{d_\ell}(\omega)\}\subset V$ and stochastic dimension $d_\ell=d_\ell(\omega)\in\N$. 
For a given random partition $\calT(\omega)=(\calT_i,i=1\dots,\tau(\omega))$ of polygons on $ \bbD$, we choose a conforming triangulation $\rmK_{\ell}(\omega)$ such that 
\begin{equation*}
\calT(\omega)\subset\rmK_{\ell}(\omega)\;\text{ and }\; h_{\ell}(\omega) :=\max_{K\in\rmK_{\ell}(\omega)}\text{diam}(K)\le \overline h_\ell\;\text{ for $\ell\in\N_0$,}
\end{equation*}
holds almost surely. 
The inclusion $\calT(\omega)\subset\rmK_{\ell}(\omega)$ states that the triangles in $\rmK_\ell(\omega)$ are chosen to match and fully cover the polygonal partition elements in $\calT(\omega)$.
Furthermore, $(\overline h_\ell,\ell\in\N_0)$ is a sequence of positive, deterministic refinement thresholds, decreasing monotonically to zero.
This guarantees that $h_\ell(\omega)\to0$ for $\ell\to\infty$ almost surely, although the absolute speed of convergence varies for each $\omega$. 
We assume shape-regularity of the triangulation uniform in $\Omega$, i.e. there exist a $\vartheta\in (0,1)$ such that
\begin{equation*}\label{astein_eq:angle}
0<\vartheta\le\sup_{\ell\in\N_0}\sup_{K\in K_\ell(\omega)}\frac{\text{diam}(K)}{\iota_K}\le\vartheta^{-1}<+\infty\quad\text{almost surely.}
\end{equation*}
In Ineq.~\eqref{astein_eq:angle}, $\iota_T$ denotes the diameter of the inscribed circle of the triangle $K$. 
For given $\{v_1(\omega),\dots,v_{d_\ell}(\omega)\}$, the semi-discrete version of the variational formulation~\eqref{astein_eq:var_approx} is then to find $u_{\epsilon,\ell}(\omega,\cdot,t)\in V_\ell(\omega)$ such that for $t\in\bbT$ and $v_\ell(\omega)\in V_\ell(\omega)$
\begin{equation}\label{astein_eq:semi_var}
\begin{split}
	\dualpair{V'}{V}{\partial_t u_{\epsilon,\ell}(\omega,\cdot,t)}{v_\ell(\omega)}
	+B_{\epsilon,\omega}(u_{\epsilon,\ell}(\omega,\cdot,t),v_\ell(\omega))
	&=\dualpair{V'}{V}{f(\omega,\cdot,t)}{v_\ell(\omega)},\\
	u_{\epsilon,\ell}(\omega,\cdot,0)&=u_{0,\ell}( \omega,\cdot),
\end{split}
\end{equation}
where $u_{0,\ell}(\omega,\cdot)\in V_\ell(\omega)$ is a suitable approximation of $u_0(\omega,\cdot)$, for instance the nodal interpolation of $u_0$ in $V_\ell(\omega)$.
The function $u_{\epsilon,\ell}(\omega,\cdot,t)$ may be expanded as
\begin{equation*}
u_{\epsilon,\ell}(\omega,\cdot,t)=\sum_{j=1}^{d_\ell(\omega)}c_j(\omega,t)v_j(\omega),
\end{equation*}
where the coefficients $c_1(\omega,t),\dots,c_{d_\ell}(\omega,t)\in\R$ depend on $(\omega,t)\in\Omega\times\bbT$ and the respective coefficient (column-)vector is
${\bf c(\omega,t)} := (c_1(\omega,t),\dots,c_{d_\ell}(\omega,t))^T$. With this, the semi-discrete variational problem in the (stochastic) finite dimensional space $V_\ell(\omega)$ is equivalent to solving the system of ordinary differential equations
\begin{equation}\label{astein_eq:ode_sd}
\frac{d}{dt}\bf{c}(\omega,t)+ \bf{A} (\omega) {\bf{c} (\omega,t)}
=\bf{F}(\omega,t),\quad t\in\bbT
\end{equation}
for $\bf c$ with stochastic stiffness matrix $(\mathbf A(\omega))_{jk}=B_{\epsilon,\omega}(v_j(\omega),v_k(\omega))$ and time-dependent load vector $(\mathbf F(\omega,t))_j=\dualpair{V'}{V}{f(\omega,\cdot,t)}{v_j(\omega)}$ for $j,k\in\{1,\dots,d_\ell(\omega)\}$.
The following result gives an error estimate in the energy norm for $u_\epsilon-u_{\epsilon,\ell}$.
\begin{theorem}\cite[Theorem 4.7]{BS18c}\label{astein_thm:semi_error_V}
	Let Assumption~\ref{astein_ass:ahat} hold such that for some $\kappa\in(1/2,1]$ it holds that
	$\bbE(\max_{i=1,\dots,\tau} \|u\|^2_{H^{1+\kappa}(\calT_i)})<+\infty$.
	 Let $u_{\epsilon,\ell}$ be the semi-discrete sample-adapted approximation of $u_{\epsilon}$ as in Eq.~\eqref{astein_eq:semi_var} and let $\|(u_0-u_{\ell,0})(\omega,\cdot)\|_H\le C\|u_0(\omega,\cdot)\|_V\overline h_\ell$ almost surely for all $\ell\in\N_0$. 
	Then, there holds almost surely the path-wise estimate 
	\begin{equation*}
	\sup_{t\in\bbT}\|(u_\epsilon-u_{\epsilon,\ell})(\omega,\cdot,\cdot)\|_{*,t}
	\le C/(a_{\epsilon,-}(\omega))^{1/2}\Big(\|f(\omega,\cdot,\cdot)\|_{L^2(\bbT;H)}+\|u_0(\omega,\cdot)\|_V\Big)\overline h_\ell^\kappa
	\end{equation*}
	and, for any $r\in[1,p)$ (with $p$ as in Ass~\ref{astein_ass:EV}), the expected parabolic estimate
	\begin{align*}
	\bbE(\sup_{t\in\bbT}\|u_\epsilon-u_{\epsilon,\ell}\|_{*,t}^r)^{1/r}\le C(\|f\|_{L^p(\Omega;L^2(\bbT;H))}+\|u_0\|_{L^p(\Omega;V)})\overline h_\ell^\kappa.
	\end{align*}
\end{theorem}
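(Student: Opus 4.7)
The plan is to adapt the classical Wheeler-type elliptic-projection analysis for parabolic FEM to the sample-adapted setting, working path-wise in $\omega$ and only at the very end taking expectations. Fix $\omega\in\Omega$ and define the (path-wise) elliptic/Ritz projection $R_\ell=R_\ell(\omega):V\to V_\ell(\omega)$ associated to $B_{\epsilon,\omega}$ by
\[
B_{\epsilon,\omega}(R_\ell v-v,w_\ell)=0\qquad\text{for all } w_\ell\in V_\ell(\omega).
\]
Since $B_{\epsilon,\omega}$ is coercive with constant proportional to $a_{\epsilon,-}(\omega)$ and continuous with constant controlled by $a_{\epsilon,+}(\omega)$ and $\|b_1\|_{L^\infty(\bbD)}$, $b_2$, the projection is well defined. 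Split the error as $u_\epsilon-u_{\epsilon,\ell}=\rho+\theta$ with $\rho:=u_\epsilon-R_\ell u_\epsilon$ and $\theta:=R_\ell u_\epsilon-u_{\epsilon,\ell}\in V_\ell(\omega)$.

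For $\rho$ I would exploit the inclusion $\calT(\omega)\subset\rmK_\ell(\omega)$: on every triangle $K\in\rmK_\ell(\omega)$ the coefficients $a_\epsilon,b_\epsilon$ are Lipschitz and $u_\epsilon|_K\in H^{1+\kappa}(K)$, so C\'ea's lemma combined with standard piecewise polynomial interpolation on the sample-adapted mesh yields, path-wise,
\[
\|\rho(t)\|_V \le C\,\frac{a_{\epsilon,+}(\omega)^{1/2}}{a_{\epsilon,-}(\omega)^{1/2}}\,\overline h_\ell^\kappa\Big(\sum_{i=1}^\tau\|u_\epsilon(\omega,\cdot,t)\|_{H^{1+\kappa}(\calT_i)}^2\Big)^{1/2},
\]
and, because $R_\ell$ is independent of $t$, the analogous estimate transfers verbatim to $\partial_t\rho$ via $\partial_t\rho=\partial_t u_\epsilon-R_\ell\partial_t u_\epsilon$, provided $\partial_t u_\epsilon$ enjoys the same piecewise regularity (obtained by differentiating the PDE in time and using $\partial_t f\in L^p(\Omega;L^2(\bbT;H))$, $u_0\in L^p(\Omega;H^2(\bbD)\cap V)$ from Assumption~\ref{astein_ass:EV}(3)).

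For $\theta$, subtracting the semi-discrete problem~\eqref{astein_eq:semi_var} from the continuous one~\eqref{astein_eq:var_approx} and invoking Galerkin orthogonality of $R_\ell$ gives the discrete parabolic equation
\[
\dualpair{V'}{V}{\partial_t\theta}{w_\ell}+B_{\epsilon,\omega}(\theta,w_\ell)=-\dualpair{V'}{V}{\partial_t\rho}{w_\ell}\qquad\forall w_\ell\in V_\ell(\omega).
\]
Testing with $w_\ell=\theta$, using coercivity to produce the $a_{\epsilon,-}(\omega)\|\nabla\theta\|_H^2$ term, Young's inequality to absorb the first-order $b_\epsilon$-term, integration in time, and Gronwall's lemma, yields
\[
\|\theta(t)\|_H^2+a_{\epsilon,-}(\omega)\!\int_0^t\!\|\nabla\theta\|_H^2\,ds \le C\Big(\|\theta(0)\|_H^2+\tfrac{1}{a_{\epsilon,-}(\omega)}\|\partial_t\rho\|^2_{L^2(\bbT;H)}\Big).
\]
Using $\|\theta(0)\|_H\le\|u_{0,\ell}-u_0\|_H+\|\rho(0)\|_H$, the hypothesis on the initial interpolation, and Theorem~\ref{astein_thm:exis} applied to $u_\epsilon$ (and its time derivative) to control the piecewise $H^{1+\kappa}$ seminorms by $\|f\|_{L^2(\bbT;H)}+\|u_0\|_V$ up to a factor $1/a_{\epsilon,-}(\omega)^{1/2}$, the triangle inequality $\|u_\epsilon-u_{\epsilon,\ell}\|_{*,t}\le\|\rho\|_{*,t}+\|\theta\|_{*,t}$ delivers the path-wise bound.

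For the $L^r(\Omega)$ estimate I would apply H\"older's inequality to decouple the random factor $1/a_{\epsilon,-}(\omega)^{1/2}$ and the boundedness ratio $a_{\epsilon,+}(\omega)/a_{\epsilon,-}(\omega)$, both lying in every $L^q(\Omega)$ by Assumption~\ref{astein_ass:ahat} together with Lemma~\ref{astein_lem:a}, from the piecewise-regularity norm of $u_\epsilon$ which is $L^p(\Omega)$-integrable under the hypothesis of the theorem combined with Theorem~\ref{astein_thm:exis}; any $r<p$ is then reached by selecting conjugate exponents. The main obstacle will be that the coercivity constant $a_{\epsilon,-}(\omega)$ enters the estimate nonlinearly and the triangulation itself is random, so shape-regularity~\eqref{astein_eq:angle}, interpolation constants, the piecewise Lipschitz norms of $a_\epsilon$, and the number $\tau(\omega)$ of partition elements must all be tracked and shown to have sufficiently many finite moments, which is precisely the role of Assumptions~\ref{astein_ass:EV} and~\ref{astein_ass:ahat}.
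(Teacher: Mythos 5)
First, a point of comparison: the paper does not prove this theorem at all --- it is imported verbatim from \cite[Theorem~4.7]{BS18c}, so there is no in-paper argument to measure you against. Your Wheeler-type splitting $u_\epsilon-u_{\epsilon,\ell}=\rho+\theta$ via a path-wise Ritz projection, an energy/Gronwall argument for $\theta$, and H\"older in $\omega$ at the end is the standard route for such results and is almost certainly the strategy of the cited reference; the architecture is sound, and you correctly identify that the inclusion $\calT(\omega)\subset\rmK_\ell(\omega)$ is what restores the piecewise $H^{1+\kappa}$ interpolation rate $\overline h_\ell^{\kappa}$ despite the discontinuous coefficients.

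Two steps, however, would fail as written. The first is your claim that $B_{\epsilon,\omega}$ is coercive with constant proportional to $a_{\epsilon,-}(\omega)$. Because $b_\epsilon$ is non-constant, the first-order term does not integrate to zero on $V$, and one only obtains a G\aa rding inequality of the form $B_{\epsilon,\omega}(v,v)\ge \tfrac{1}{2}a_{\epsilon,-}(\omega)\|\nabla v\|_H^2 - C\|b_\epsilon\|_{L^\infty(\bbD)}^2 a_{\epsilon,-}(\omega)^{-1}\|v\|_H^2$. Hence the Ritz projection with respect to the full non-symmetric form is not obviously well defined. You must either project with respect to the diffusion part $\int_\bbD a_\epsilon\nabla v\cdot\nabla w\,dx$ alone, or with respect to the shifted form $B_{\epsilon,\omega}+\lambda(\cdot,\cdot)_H$ with $\lambda=\lambda(\omega)$ large enough, and then move the resulting lower-order terms to the right-hand side of the $\theta$-equation, where Gronwall absorbs them. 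This is routine but not optional, and it is exactly where the ratio $a_{\epsilon,+}(\omega)/a_{\epsilon,-}(\omega)$ and $\|b_1\|_{L^\infty(\bbD)}$ enter the constant.

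The second gap is your appeal to Theorem~\ref{astein_thm:exis} to control $\max_{i}\|u_\epsilon(\omega,\cdot,t)\|_{H^{1+\kappa}(\calT_i)}$ (and the same quantity for $\partial_t u_\epsilon$) by $\|f(\omega,\cdot,\cdot)\|_{L^2(\bbT;H)}+\|u_0(\omega,\cdot)\|_V$ times a power of $1/a_{\epsilon,-}(\omega)$. Theorem~\ref{astein_thm:exis} is an energy estimate and yields nothing beyond $L^2(\bbT;V)$ control; passing to piecewise $H^{1+\kappa}$ bounds requires a shift theorem for the transmission problem on the random polygonal partition. That is precisely the content of the exponent $\kappa$ and of the hypothesis $\bbE(\max_{i}\|u\|^2_{H^{1+\kappa}(\calT_i)})<+\infty$ --- which, note, only asserts finiteness, not a bound by the data with the prefactor $a_{\epsilon,-}(\omega)^{-1/2}$ appearing in the stated path-wise estimate. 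You must either take such a quantitative regularity bound as an additional assumption or prove it, and the same issue recurs, more delicately, for $\partial_t u_\epsilon$ after differentiating the equation in time. With these two points repaired, the remainder of your argument (interpolation on the adapted mesh, Gronwall for $\theta$, the treatment of $\theta(0)$ via the initial-data hypothesis, and the final H\"older decoupling of the random constants using Lemma~\ref{astein_lem:a} and Assumption~\ref{astein_ass:ahat}) goes through.
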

The above statement gives a bound on the error in the $L^2(\bbT;V)$-norm. The functional $\Psi$ however is defined on $L^2(\bbT;H)$, thus it is favorable to derive an error bound with respect to the weaker $L^2(\bbT;H)$-norm.
\begin{theorem}\label{astein_thm:semi_error_H}
	Let Assumption~\ref{astein_ass:ahat} hold such that for some $\kappa\in(1/2,1]$ there holds
	$\bbE(\max_{i=1,\dots,\tau} \|u\|^2_{H^{1+\kappa}(\calT_i)})<+\infty$ and let $\|(u_0-u_{\ell,0})(\omega,\cdot)\|_H\le C\|u_0(\omega,\cdot)\|_{H^2(\bbD)}\overline h_\ell^2$ almost surely. Then, 
	\begin{align*}
	\bbE(\|u_\epsilon-u_{\ell,\epsilon}\|_{L^2(\bbT;H)}^2)^{1/2}\le C\overline h_\ell^{2\kappa}.
	\end{align*}
\end{theorem}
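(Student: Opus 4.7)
The plan is to apply an Aubin--Nitsche duality argument path-wise on the random partition and then pass to expectations, exploiting the $V$-norm error estimate of Theorem~\ref{astein_thm:semi_error_V} together with the integrability of all random constants afforded by Assumptions~\ref{astein_ass:EV} and \ref{astein_ass:ahat}.

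First, we fix $\omega\in\Omega$, set $e:=u_\epsilon-u_{\epsilon,\ell}$, and introduce the path-wise backward parabolic dual problem: find $\varphi(\omega,\cdot,\cdot)\in L^2(\bbT;V)$ with $\varphi(\omega,\cdot,T)=0$ such that
\[
-\dualpair{V'}{V}{\partial_t\varphi}{v}+B_{\epsilon,\omega}(v,\varphi)=\langle e(t),v\rangle_H\quad\text{for all } v\in V.
\]
A time-reversal $t\mapsto T-t$ converts this into a forward parabolic problem of the same type as~\eqref{astein_eq:var_approx}, so Theorem~\ref{astein_thm:exis} yields existence, uniqueness, and the energy bound $\sup_{t\in\bbT}\|\varphi\|_{*,t}^2\le(C/a_{\epsilon,-}(\omega))\,\|e\|^2_{L^2(\bbT;H)}$. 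Because the jump structure of $a_\epsilon,b_\epsilon$ is unchanged, $\varphi$ inherits the same piecewise $H^{1+\kappa}$-regularity over $\calT(\omega)$ assumed for $u$, so that $\max_i\|\varphi\|_{L^2(\bbT;H^{1+\kappa}(\calT_i))}$ is controlled path-wise by $\|e\|_{L^2(\bbT;H)}$ times a random constant which lies in every $L^q(\Omega)$.

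Next, testing the dual equation against $e(t)$, integrating over $\bbT$, and integrating by parts in time (using $\varphi(T)=0$) gives the duality identity
\[
\|e\|_{L^2(\bbT;H)}^2=\int_0^T\bigl[\dualpair{V'}{V}{\partial_t e}{\varphi}+B_{\epsilon,\omega}(e,\varphi)\bigr]\,dt+\langle e(0),\varphi(0)\rangle_H.
\]
Since $e$ satisfies the path-wise Galerkin orthogonality $\dualpair{V'}{V}{\partial_t e}{v_\ell}+B_{\epsilon,\omega}(e,v_\ell)=0$ for all $v_\ell\in V_\ell(\omega)$ and almost every $t$, we take $v_\ell=I_\ell\varphi(t)\in V_\ell(\omega)$, a sample-adapted quasi-interpolation, and subtract, so only $\varphi-I_\ell\varphi$ remains in the integrand. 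Continuity of $B_{\epsilon,\omega}$, the adapted-mesh approximation $\|\varphi-I_\ell\varphi\|_{L^2(\bbT;V)}\le C\overline h_\ell^\kappa\bigl(\sum_i\|\varphi\|^2_{L^2(\bbT;H^{1+\kappa}(\calT_i))}\bigr)^{1/2}$, Theorem~\ref{astein_thm:semi_error_V}'s $O(\overline h_\ell^\kappa)$ bound on $\|e\|_{L^2(\bbT;V)}$, and the dual energy estimate for $\varphi$ combine to bound the $B_{\epsilon,\omega}$-integral by $C(\omega)\overline h_\ell^{2\kappa}\|e\|_{L^2(\bbT;H)}$. The time-derivative integral is handled either by estimating $\|\partial_t e\|_{L^2(\bbT;V')}$ directly from~\eqref{astein_eq:var_approx}--\eqref{astein_eq:semi_var} and pairing with $\|\varphi-I_\ell\varphi\|_{L^2(\bbT;V)}$, or by a further integration by parts in time that transfers the derivative onto $\varphi-I_\ell\varphi$ and exploits the temporal regularity $\partial_t u_\epsilon\in L^2(\bbT;H)$ granted by Assumption~\ref{astein_ass:EV}.3. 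The initial term is bounded via $|\langle e(0),\varphi(0)\rangle_H|\le C\|u_0\|_{H^2(\bbD)}\overline h_\ell^2\cdot\|\varphi(0)\|_H$, which is of the same order since $\kappa\le 1$.

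Dividing by $\|e\|_{L^2(\bbT;H)}$ and absorbing remaining factors with Young's inequality yields the path-wise bound $\|e\|_{L^2(\bbT;H)}\le C(\omega)\overline h_\ell^{2\kappa}$. Squaring and taking expectations, we repeatedly apply H\"older's inequality to decouple the random factors $1/a_{\epsilon,-}$, $a_{\epsilon,+}$, $\max_i\|u_\epsilon\|_{H^{1+\kappa}(\calT_i)}$, $\max_i\|\varphi\|_{H^{1+\kappa}(\calT_i)}$, $\|u_0\|_{H^2(\bbD)}$, and $\|f\|_{L^2(\bbT;H)}$---each of which lies in $L^q(\Omega)$ for every $q<\infty$ by Assumption~\ref{astein_ass:EV} and Lemma~\ref{astein_lem:a}---yielding the claimed $\bbE(\|e\|^2_{L^2(\bbT;H)})^{1/2}\le C\overline h_\ell^{2\kappa}$. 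The main obstacle we anticipate is twofold: (i) transferring the piecewise $H^{1+\kappa}$-regularity of the primal problem to the adjoint operator uniformly on the random polygonal partition so that the dual bound scales correctly with $\|e\|_{L^2(\bbT;H)}$ path-wise, and (ii) controlling the $\partial_t e$-integral at the right order in $\overline h_\ell$---both hinge critically on the temporal regularity $\partial_t f\in L^p$ and the $H^2$-regularity of $u_0$ built into Assumption~\ref{astein_ass:EV}.3.
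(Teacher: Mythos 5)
Your proposal follows essentially the same route as the paper: a path-wise parabolic Aubin--Nitsche duality argument, the same duality identity with the same three-term decomposition (initial term of order $\overline h_\ell^2$, a $\partial_t e$-term and a $B_{\epsilon,\omega}$-term each of order $\overline h_\ell^{\kappa}\cdot\overline h_\ell^{\kappa}$ via Galerkin orthogonality and Theorem~\ref{astein_thm:semi_error_V}), followed by H\"older's inequality in $\Omega$ to decouple the random constants. The only substantive difference is your choice of discrete comparison function: you take a quasi-interpolant $I_\ell\varphi$ and must then assert piecewise $H^{1+\kappa}$-regularity of the dual solution with a bound by $\|e\|_{L^2(\bbT;H)}$ --- precisely the obstacle you flag --- whereas the paper instead takes $v_\ell$ to be the semi-discrete Galerkin approximation of the dual solution $w$, so that Theorem~\ref{astein_thm:semi_error_V} (applied to the dual problem with source $g$ and zero initial data) delivers the needed $\calO(\overline h_\ell^{\kappa})$ bound on $\|w-v_\ell\|_{L^2(\bbT;V)}$ directly, packaging the dual regularity issue into that theorem's hypotheses rather than into an explicit interpolation estimate.
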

\begin{proof}
	For fixed $\omega$,	we consider the path-wise parabolic dual problem to find $w(\omega,\cdot,\cdot)\in L^2(\bbT; V)$ with $\partial_t w(\omega,\cdot,\cdot)\in L^2(\bbT; V')$ such that, for $t\in\bbT$,
	\begin{equation}\label{astein_eq:var_dual1}
	\dualpair{V'}{V}{\partial_t w(\omega,\cdot,t)}{v}+B_{\epsilon,\omega}(w(\omega,\cdot,t),v)=\dualpair{V'}{V}{g(\omega,\cdot,t)}{v},\quad \text{ for all } v\in V,
	\end{equation}
	where $w(\omega,\cdot,0)=w_0(\omega,\cdot):=0$ and $g(\omega,\cdot,t):=(u_{\epsilon}-u_{\epsilon,\ell})(\omega,\cdot,T-t)\in V$ almost surely for any $t\in\bbT$ by Theorem~\ref{astein_thm:exis}.
	Hence, we may test against $v=g(\omega,\cdot,t)$ in Eq.~\eqref{astein_eq:var_dual1} to obtain
	\begin{equation}\label{astein_eq:var_dual2}
	\|g(\omega,\cdot,t)\|_H^2=\dualpair{V'}{V}{\partial_t w(\omega,\cdot,t)}{g(\omega,\cdot,t)}+B_{\epsilon,\omega}(w(\omega,\cdot,t),g(\omega,\cdot,t)).
	\end{equation}
	Furthermore, for any $v_\ell(\omega)\in V_\ell(\omega)$ it holds by Eqs.~\eqref{astein_eq:var_approx},\eqref{astein_eq:semi_var}
	\begin{equation}\label{astein_eq:var_dual3}
	\dualpair{V'}{V}{\partial_t (u_\epsilon-u_{\epsilon,\ell})(\omega,\cdot,t)}{v_\ell(\omega)}
	=-B_{\epsilon,\omega}((u_\epsilon-u_{\epsilon,\ell})(\omega,\cdot,t),v_\ell(\omega))
	\end{equation} 
	and thus
	\begin{equation}\label{astein_eq:time_change}
	\begin{split}
	B_{\epsilon,\omega}(g(\omega,\cdot,t),w(\omega,\cdot,t))
	&=\dualpair{V'}{V}{\partial_t g(\omega,\cdot,t)}{v_\ell(\omega)-w(\omega,\cdot,t)+w(\omega,\cdot,t)}\\
	&\quad+B_{\epsilon,\omega}(g(\omega,\cdot,t),w(\omega,\cdot,t)-v_\ell(\omega)),
	\end{split} 
	\end{equation}	where we have used the that
	$\partial_tg(\omega,\cdot,t)=-(\partial_tu_{\epsilon}-\partial_tu_{\epsilon,\ell})(\omega,\cdot,T-t)$ by the chain rule.
	Substituting Eq.~\eqref{astein_eq:time_change} in Eq.~\eqref{astein_eq:var_dual2} and integrating over $\bbT$ yields
	\begin{align*}
	\|g(\omega,\cdot,\cdot)\|_{L^2(\bbT;H)}^2
	&=\int_0^T\dualpair{V'}{V}{\partial_t w(\omega,\cdot,t)}{g(\omega,\cdot,t)}+\dualpair{V'}{V}{\partial_tg(\omega,\cdot,t)}{w(\omega,\cdot,t)}dt\\
	&\quad+\int_0^T\dualpair{V'}{V}{\partial_tg(\omega,\cdot,t)}{v_\ell(\omega)-w(\omega,\cdot,t)}dt\\
	&\quad+\int_0^TB_{\epsilon,\omega}(g(\omega,\cdot,t),w(\omega,\cdot,t)-v_\ell(\omega))dt\\
	&=:I+II+III.
	\end{align*}
	Integration by parts and the path-wise estimate in Theorem~\ref{astein_thm:exis} yield for $I$
	\begin{align*}
	I&=(w(\omega,\cdot,T),g(\omega,\cdot,T))_H-(w_0(\omega,\cdot),g(\omega,\cdot,0))_H\\
	&\le \|w(\omega,\cdot,T)\|_H\|u_0(\omega,\cdot)-u_{0,\ell}(\omega,\cdot)\|_H\\
	&\le C\frac{1}{a_{\epsilon,-}(\omega)}\|g(\omega,\cdot,\cdot)\|_{L^2(\bbT;H)}\|u_0(\omega,\cdot)\|_{H^2(\bbD)}\overline h_\ell^2,
	\end{align*}
	where we have used $\|(u_0-u_{\ell,0})(\omega,\cdot)\|_H\le C\|u_0(\omega,\cdot)\|_{H^2(\bbD)}\overline h_\ell^2$ in the last step.
	To bound the second term, we choose $v_\ell=v_\ell(\omega,\cdot,t)$ to be the semi-discrete FE approximation of $w(\omega,\cdot,t)$ in $V_\ell(\omega)$.
	Since $w_0\equiv 0$, there is no approximation error in the initial condition and with the path-wise estimate from Theorem~\ref{astein_thm:semi_error_V} it follows that
	\begin{align*}
	II&\le \|\partial_tg(\omega,\cdot,\cdot)\|_{L^2(\bbT;V')}\|v_\ell(\omega,\cdot,\cdot)-w(\omega,\cdot,\cdot)\|_{L^2(\bbT;V)}\\
	&\le C\frac{1}{(a_{\epsilon,-}(\omega))^{1/2}}\|\partial_tg(\omega,\cdot,\cdot)\|_{L^2(\bbT;V')}\|g(\omega,\cdot,\cdot)\|_{L^2(\bbT;H)}\overline h_\ell^\kappa.
	\end{align*}
	From Eq.~\eqref{astein_eq:var_dual3} and Theorem~\ref{astein_thm:semi_error_V} we also see that 
	\begin{align*}
	\|\partial_tg(\omega,\cdot,\cdot)\|_{L^2(\bbT;V')}
	\le C\frac{a_{\epsilon,+}(\omega)}{(a_{\epsilon,-}(\omega))^{1/2}}\Big(\|f(\omega,\cdot,\cdot)\|_{L^2(\bbT;H)}+\|u_0(\omega,\cdot)\|_{V}\Big)\overline h_\ell^\kappa
	\end{align*}
	and thus
	\begin{align*}
	II\le C\frac{a_{\epsilon,+}(\omega)}{a_{\epsilon,-}(\omega)}\Big(\|f(\omega,\cdot,\cdot)\|_{L^2(\bbT;H)}+\|u_0(\omega,\cdot)\|_{V}\Big)\|g(\omega,\cdot,\cdot)\|_{L^2(\bbT;H)}\overline h_\ell^{2\kappa}.
	\end{align*}
	Similarly, we bound the last term again with Theorem~\ref{astein_thm:semi_error_V} via 
	\begin{align*}
	III&\le Ca_{\epsilon,+}(\omega)	\|g(\omega,\cdot,\cdot)\|_{L^2(\bbT;V)}\|v_\ell(\omega,\cdot,\cdot)-w(\omega,\cdot,\cdot)\|_{L^2(\bbT;V)}\\
	&\le C\frac{a_{\epsilon,+}(\omega)}{a_{\epsilon,-}(\omega)}
	\Big(\|f(\omega,\cdot,\cdot)\|_{L^2(\bbT;H)}+\|(u_0(\omega,\cdot)\|_{V}\Big)\|g(\omega,\cdot,\cdot)\|_{L^2(\bbT;H)}\overline h_\ell^{2\kappa}.
	\end{align*}	
	The estimates on $I-III$ now show that 
	\begin{align*}
	\|g(\omega,\cdot,\cdot)\|_{L^2(\bbT;H)}
	&\le C\frac{a_{\epsilon,+}(\omega)}{a_{\epsilon,-}(\omega)}
	\Big(\|f(\omega,\cdot,\cdot)\|_{L^2(\bbT;H)}+\|(u_0(\omega,\cdot)\|_{H^2(\bbD)}\Big)\overline h_\ell^{2\kappa}.
	\end{align*}
	and the claim follows by Assumption~\ref{astein_ass:ahat} and H\"older's inequality.
\end{proof}

\begin{remark}
We remark that the additional condition on the initial data approximation in Theorem~\ref{astein_thm:semi_error_H} is fulfilled if $u_0$ has almost surely continuous paths and $u_{\ell,0}$ is chosen as the path-wise nodal interpolation with respect to the sample-adapted FE basis.
\end{remark}

\subsection{Fully Discrete Pathwise Approximation}
For a fully discrete formulation of Problem~\eqref{astein_eq:semi_var}, we consider a time grid $0=t_0<t_1<\dots<t_n=T$ in $\bbT$ for some $n\in\N$
and assume the grid is equidistant with fixed time step $\Delta t:=t_i-t_{i-1}>0$. 
The temporal derivative at $t_i$ is approximated by the backward difference 
\begin{equation*}
\partial_t u_{\epsilon,\ell}(\omega,\cdot,t_i)=(u_{\epsilon,\ell}(\omega,\cdot,t_i)- u_{\epsilon,\ell}(\omega,\cdot,t_{i-1}))/\Delta t,\quad i=1,\dots,n.
\end{equation*}
We emphasize again that in our model problem the weak and strong temporal derivative of $u_{\epsilon,\ell}$ coincide due to the temporal regularity of the solution. Hence, the backward difference as an approximation scheme in a strong sense is justified. 
This yields the fully discrete problem to find $(u_{\epsilon,\ell}^{(i)}(\omega,\cdot),i=0,\dots,n)\subset V_\ell(\omega)$ such that for all $ v_\ell(\omega)\in V_\ell(\omega)$ and $i=1,\dots,n$
\begin{equation*}
\begin{split}
	\frac{((u_{\epsilon,\ell}^{(i)}-u_{\epsilon,\ell}^{(i-1)})(\omega,\cdot), v_\ell(\omega))_H}{\Delta t}+B_{\epsilon,\omega}( u_{\epsilon,\ell}^{(i)}(\omega,\cdot),v_\ell(\omega))
	&=\dualpair{V'}{V}{f(\omega,\cdot,t_i)}{v_\ell(\omega)}, \\
	u_{\epsilon,\ell}^{(0)}(\omega,\cdot)&=u_{0,\ell}(\omega,\cdot).
\end{split}
\end{equation*}
The fully discrete solution is given by  
\begin{equation*}
u_{\epsilon,\ell}^{(i)}(\omega,\cdot)=\sum_{j=1}^{d_\ell(\omega)}c_{i,j}(\omega)v_j(\omega),\quad i=1,\dots,n,
\end{equation*}
where the coefficient vector $\mathbf{c_i}(\omega)=(c_{i,1}(\omega),\dots,c_{i,d_\ell}(\omega)))$ solves the linear system of equations
\begin{equation*}
(\mathbf M+\Delta t\mathbf A(\omega)) {\bf c_i(\omega)}
=\Delta t\mathbf F(\omega,t_i)+\mathbf{Mc_{i-1}}(\omega)
\end{equation*}
in every discrete point in time $t_i$, and $\mathbf A$ and $\mathbf F$ are as in Eq.~\eqref{astein_eq:ode_sd}.
The mass matrix is given by $(\mathbf M)_{jk}:=(v_j(\omega),v_k(\omega))_H$ and $ \mathbf{c_0}$ consists of the basis coefficients of $u_{0,\ell}\in V_\ell(\omega)$ with respect to $\{v_1(\omega),\dots,v_{d_\ell}(\omega)\}$.
We extend the discrete solution to the whole temporal domain by the linear interpolation 
\begin{equation*}
\overline u_{\epsilon,\ell}(\cdot,\cdot,t):=(u_{\epsilon,\ell}^{(i)}-u_{\epsilon,\ell}^{(i-1)})\frac{(t-t_{i-1})}{\Delta t}+u_{\epsilon,\ell}^{(i-1)},\quad t\in[t_{i-1},t_i],\quad i=1,\dots,n.
\end{equation*}

\begin{theorem}\cite[Theorem 4.12]{BS18c}\label{astein_thm:full_error_V}
	Let Assumption~\ref{astein_ass:ahat} hold, let $(u_{\epsilon,\ell}^{(i)},i=0,\dots,n)$ be the fully discrete sample-adapted approximation of $u_{N,\epsilon}$, and let $\overline u_{\epsilon,\ell}$ be the linear interpolation of $(u_{\epsilon,\ell}^{(i)},i=0,\dots,n)$ in $\bbT$. 
	Then, for $C>0$ independent of $\epsilon, h_\ell$ and $\Delta t$, it holds
	\begin{align*}
	\bbE(\sup_{t\in\bbT}\|u_{\epsilon,\ell}-\overline u_{\epsilon,\ell}\|_{*,t}^2)^{1/2}\le C\Delta t.
	\end{align*}
\end{theorem}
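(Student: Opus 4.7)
The plan is to first establish a path-wise backward Euler estimate for a.e. $\omega\in\Omega$ with an explicit random constant, and then take expectations using Assumption~\ref{astein_ass:ahat} together with H\"older's inequality. Throughout, the semi-discrete problem~\eqref{astein_eq:semi_var} is regarded as a parabolic problem on the (sample-dependent) finite-dimensional space $V_\ell(\omega)$, so the temporal analysis reduces to implicit Euler applied to a linear evolution equation with fixed coefficients.

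First I would introduce the nodal errors $e^{(i)}(\omega,\cdot):=u_{\epsilon,\ell}(\omega,\cdot,t_i)-u_{\epsilon,\ell}^{(i)}(\omega,\cdot)\in V_\ell(\omega)$, with $e^{(0)}=0$, and subtract the fully discrete equation from~\eqref{astein_eq:semi_var} evaluated at $t_i$. This yields a perturbed scheme
\begin{equation*}
\frac{(e^{(i)}-e^{(i-1)},v_\ell)_H}{\Delta t}+B_{\epsilon,\omega}(e^{(i)},v_\ell)=\dualpair{V'}{V}{\rho^{(i)}}{v_\ell},\quad v_\ell\in V_\ell(\omega),
\end{equation*}
where the consistency residual
\begin{equation*}
\rho^{(i)}(\omega,\cdot):=\partial_t u_{\epsilon,\ell}(\omega,\cdot,t_i)-\frac{u_{\epsilon,\ell}(\omega,\cdot,t_i)-u_{\epsilon,\ell}(\omega,\cdot,t_{i-1})}{\Delta t}
\end{equation*}
admits the Taylor-type bound $\|\rho^{(i)}\|_{V'}\le \Delta t^{1/2}\bigl(\int_{t_{i-1}}^{t_i}(t-t_{i-1})\|\partial_t^2 u_{\epsilon,\ell}\|_{V'}^2dt\bigr)^{1/2}$.

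Next, I would test with $v_\ell=e^{(i)}$, use $(e^{(i)}-e^{(i-1)},e^{(i)})_H\ge\frac{1}{2}(\|e^{(i)}\|_H^2-\|e^{(i-1)}\|_H^2)$, exploit the G\r{a}rding-type coercivity of $B_{\epsilon,\omega}$ (the convective part is absorbed by Young's inequality at the cost of the factor $a_{\epsilon,+}/a_{\epsilon,-}$) and Young's inequality on the right-hand side. Summing for $i=1,\dots,k$ and invoking the discrete Gr\"onwall inequality delivers, path-wise,
\begin{equation*}
\max_{1\le i\le n}\|e^{(i)}\|_H^2+\Delta t\sum_{i=1}^n\|e^{(i)}\|_V^2\le C\,\Gamma(\omega)\,\Delta t^2\|\partial_t^2 u_{\epsilon,\ell}(\omega,\cdot,\cdot)\|_{L^2(\bbT;V')}^2,
\end{equation*}
with $\Gamma(\omega)$ a rational function of $a_{\epsilon,\pm}(\omega)$. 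To convert this into a bound for $\overline u_{\epsilon,\ell}-u_{\epsilon,\ell}$ in $\|\cdot\|_{*,t}$, on each subinterval $[t_{i-1},t_i]$ I would split
\begin{equation*}
\overline u_{\epsilon,\ell}(\omega,\cdot,t)-u_{\epsilon,\ell}(\omega,\cdot,t)=\bigl[\tfrac{t-t_{i-1}}{\Delta t}e^{(i)}+\tfrac{t_i-t}{\Delta t}e^{(i-1)}\bigr]+\bigl[\Pi_i u_{\epsilon,\ell}(\omega,\cdot,t)-u_{\epsilon,\ell}(\omega,\cdot,t)\bigr],
\end{equation*}
where $\Pi_i$ is the linear-in-time nodal interpolant. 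The first bracket is controlled by the nodal-error estimate, the second by the standard interpolation bound $\Delta t\,\|\partial_t u_{\epsilon,\ell}\|_{L^2([t_{i-1},t_i];V)}$.

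The remaining ingredient is an $\omega$-wise bound on $\|\partial_t u_{\epsilon,\ell}\|_{L^2(\bbT;V)}$ and $\|\partial_t^2 u_{\epsilon,\ell}\|_{L^2(\bbT;V')}$. I would obtain these by formally differentiating~\eqref{astein_eq:semi_var} in time and re-running the energy estimate of Theorem~\ref{astein_thm:exis} for $\partial_t u_{\epsilon,\ell}$, using the data $\partial_t f$ (available by Assumption~\ref{astein_ass:EV}(3)) and initial datum $\partial_t u_{\epsilon,\ell}(0)$, which is determined by $f(\cdot,0)$, $u_{0,\ell}$, and the elliptic operator induced by $B_{\epsilon,\omega}$; the bound will again involve $\Gamma(\omega)$ and norms of $f,\partial_t f,u_0$. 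Combining both estimates yields a path-wise inequality
\begin{equation*}
\sup_{t\in\bbT}\|u_{\epsilon,\ell}-\overline u_{\epsilon,\ell}\|_{*,t}\le C\,\widetilde\Gamma(\omega)\bigl(\|f(\omega,\cdot,\cdot)\|_{L^2(\bbT;H)}+\|\partial_t f(\omega,\cdot,\cdot)\|_{L^2(\bbT;H)}+\|u_0(\omega,\cdot)\|_V\bigr)\Delta t.
\end{equation*}
Finally, I would square, take expectations, and apply H\"older's inequality: Assumption~\ref{astein_ass:ahat} makes $\widetilde\Gamma\in L^q(\Omega;\R)$ for every $q<\infty$, and Assumption~\ref{astein_ass:EV}(3) makes the data factor lie in $L^p(\Omega;\R)$ with $p>2$, so the product is in $L^2(\Omega;\R)$ and the claim follows.

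The main obstacle is the uniform-in-$\omega$ control of $\partial_t^2 u_{\epsilon,\ell}$: the second time-derivative estimate forces one to differentiate the semi-discrete equation and handle the non-symmetric convective term without inflating the dependence on $1/a_{\epsilon,-}$ beyond an integrable power. The sample-dependence of $V_\ell(\omega)$ is harmless because the estimates above are carried out for fixed $\omega$ and only use Galerkin orthogonality within $V_\ell(\omega)$; the randomness enters purely through the scalar factor $\widetilde\Gamma(\omega)$, which the moment assumptions are designed to absorb.
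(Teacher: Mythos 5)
The paper does not actually prove this theorem: it is imported verbatim from \cite[Theorem 4.12]{BS18c}, so there is no in-text argument to compare against. Judged on its own terms, your proposal is the standard consistency-plus-stability analysis for implicit Euler applied path-wise on the frozen finite-dimensional space $V_\ell(\omega)$, and the overall architecture (nodal error equation, Taylor residual of order $\Delta t\,\|\partial_t^2 u_{\epsilon,\ell}\|_{L^2(\bbT;V')}$ after summation, testing with $e^{(i)}$, discrete Gr\"onwall, then adding the linear-in-time interpolation error and taking moments via H\"older and Assumption~\ref{astein_ass:ahat}) is the right one and delivers the claimed rate.

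Two points deserve more care than you give them. First, for the $\sup_{t}\|\cdot\|_H$ component of $\|\cdot\|_{*,t}$ the interpolation remainder on $[t_{i-1},t_i]$ is $\|\Pi_i u_{\epsilon,\ell}(t)-u_{\epsilon,\ell}(t)\|_H\le\int_{t_{i-1}}^{t_i}\|\partial_t u_{\epsilon,\ell}\|_H\,ds$, which only yields $O(\Delta t)$ if you control $\|\partial_t u_{\epsilon,\ell}\|_{L^\infty(\bbT;H)}$ (your stated bound via $\|\partial_t u_{\epsilon,\ell}\|_{L^2([t_{i-1},t_i];V)}$ gives only $O(\Delta t^{1/2})$ there); the $L^\infty(\bbT;H)$ bound is exactly the energy estimate of Theorem~\ref{astein_thm:exis} applied to the time-differentiated equation, so it is available, but it should be invoked explicitly. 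Second, and more substantively, your bounds on $\partial_t u_{\epsilon,\ell}$ and $\partial_t^2 u_{\epsilon,\ell}$ require that $\|\partial_t u_{\epsilon,\ell}(\omega,\cdot,0)\|_H$ be bounded \emph{uniformly in $\ell$}: from the equation at $t=0$ this quantity is controlled by $\sup_{v_\ell}B_{\epsilon,\omega}(u_{0,\ell},v_\ell)/\|v_\ell\|_H$, and with a generic choice of $u_{0,\ell}$ (e.g.\ nodal interpolation) a naive inverse estimate would cost a factor $h_\ell^{-1}$. One needs the compatibility coming from $u_0\in H^2(\bbD)\cap V$ together with the sample-adapted mesh (so that integration by parts can be performed element-wise against the piecewise-smooth $a_\epsilon$), or a Ritz-projection choice of $u_{0,\ell}$; since $u_0$ is independent of $\calT$, the flux $a_\epsilon\nabla u_0$ genuinely jumps across interfaces and this step is where the discontinuity of the coefficient actually enters the temporal analysis. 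You correctly identify the second-derivative bound as the main obstacle, but the resolution runs through this initial-data compatibility, which your sketch leaves implicit.
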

The final corollary on the overall approximation error is now an immediate consequence of Theorems~\ref{astein_thm:a_error_V},\,\ref{astein_thm:semi_error_H} and \ref{astein_thm:full_error_V} and the Lipschitz condition on $\psi$.
\begin{corollary}\label{astein_cor:overall_error}
	Let Assumption~\ref{astein_ass:ahat} hold such that for some $\kappa\in(1/2,1]$ there holds
	$\bbE(\max_{i=1,\dots,\tau} \|u\|^2_{H^{1+\kappa}(\calT_i)})<+\infty$ and let  $\|(u_0-u_{\ell,0})(\omega,\cdot)\|_H\le C\|u_0(\omega,\cdot)\|_{H^2(\bbD)}\overline h_\ell^2$ almost surely.
	The (fully) approximated QoI is defined by $\Psi_{\epsilon,\ell,\Delta t}:=\psi(\overline u_{\epsilon,\ell})$. 
	Then, there holds the error bound
	\begin{equation*}
	\EE(|\Psi-\Psi_{\epsilon,\ell,\Delta t}|^2)^{1/2}\le C(\epsilon+\overline h_\ell^{2\kappa}+\Delta t).
	\end{equation*}
\end{corollary}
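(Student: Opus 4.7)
The plan is to reduce the corollary to a triangle inequality over three sources of approximation error, each of which is controlled by an already-established theorem. Invoking the Lipschitz property of $\psi$ from Assumption~\ref{astein_ass:EV} path-wise yields
\begin{equation*}
|\Psi(\omega) - \Psi_{\epsilon,\ell,\Delta t}(\omega)| \le C_\psi \|u(\omega,\cdot,\cdot) - \overline u_{\epsilon,\ell}(\omega,\cdot,\cdot)\|_{L^2(\bbT;H)},
\end{equation*}
so squaring and taking expectations reduces the task to bounding $\bbE(\|u - \overline u_{\epsilon,\ell}\|_{L^2(\bbT;H)}^2)^{1/2}$.

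Next, I would insert $u_\epsilon$ and $u_{\epsilon,\ell}$ as intermediate quantities and apply the triangle inequality in $L^2(\Omega;L^2(\bbT;H))$. This produces three terms: the coefficient-approximation error $\|u-u_\epsilon\|$, the spatial semi-discretization error $\|u_\epsilon - u_{\epsilon,\ell}\|$, and the time-stepping error $\|u_{\epsilon,\ell} - \overline u_{\epsilon,\ell}\|$. The middle term is handled directly by Theorem~\ref{astein_thm:semi_error_H}, which is already formulated in the $L^2(\bbT;H)$-norm and yields the bound $C\overline h_\ell^{2\kappa}$.

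For the first and third terms, Theorems~\ref{astein_thm:a_error_V} and~\ref{astein_thm:full_error_V} supply bounds in the path-wise parabolic norm $\|\cdot\|_{*,t}$. To translate these into $L^2(\bbT;H)$-bounds I would use the elementary inequality $\|v(\cdot,t)\|_H \le \|v\|_{*,t}$, which is immediate from the definition in Eq.~\eqref{astein_eq:e_norm}, whence
\begin{equation*}
\|v\|_{L^2(\bbT;H)}^2 = \int_0^T \|v(\cdot,t)\|_H^2\,dt \le T \sup_{t\in\bbT}\|v\|_{*,t}^2.
\end{equation*}
Taking expectations then gives $\bbE(\|u - u_\epsilon\|_{L^2(\bbT;H)}^2)^{1/2} \le C\epsilon$ from Theorem~\ref{astein_thm:a_error_V} and $\bbE(\|u_{\epsilon,\ell}-\overline u_{\epsilon,\ell}\|_{L^2(\bbT;H)}^2)^{1/2} \le C\Delta t$ from Theorem~\ref{astein_thm:full_error_V}. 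Summing the three contributions and absorbing $C_\psi$ and $\sqrt{T}$ into the generic constant completes the argument.

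There is essentially no serious obstacle: the substantive work has already been done in the three cited theorems, and what remains is a triangle inequality plus the norm translation above. The only conceptual point worth flagging is that the Lipschitz hypothesis on $\psi$ is formulated in the $L^2(\bbT;H)$-norm precisely so that the sharper $\overline h_\ell^{2\kappa}$ rate from Theorem~\ref{astein_thm:semi_error_H} (rather than the energy-norm rate $\overline h_\ell^{\kappa}$ from Theorem~\ref{astein_thm:semi_error_V}) propagates into the final bound, which is crucial for balancing spatial and temporal errors in the ensuing MLMC analysis.
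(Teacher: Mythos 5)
Your argument is correct and is exactly the route the paper takes: the paper simply declares the corollary ``an immediate consequence of Theorems~\ref{astein_thm:a_error_V}, \ref{astein_thm:semi_error_H} and \ref{astein_thm:full_error_V} and the Lipschitz condition on $\psi$,'' and your write-up supplies precisely that triangle-inequality decomposition, together with the (correct) observation that $\|v(\cdot,t)\|_H\le\|v\|_{*,t}$ lets the parabolic-norm bounds of Theorems~\ref{astein_thm:a_error_V} and~\ref{astein_thm:full_error_V} be converted into $L^2(\bbT;H)$ bounds at the cost of a factor $\sqrt{T}$.
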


Given a sequence of discretization tresholds $\overline h_\ell>0$ for $\ell\in\N_0$, one should adjust $\epsilon$ and $\Delta t$ such that $\overline h_\ell^{2\kappa}\simeq\epsilon\simeq\Delta t$ to achieve an error equilibrium. Hence, we denote the adjusted parameters on level $\ell$ by $\epsilon_\ell$ and $\Delta t_\ell$ and assume that all errors are equilibrated in the sense that $c\overline h_\ell^{2\kappa}\le \epsilon_\ell,\Delta t_\ell\le C \overline h_\ell^{2\kappa}$ holds
for constants $c,C>0$ independent of $\ell$.
We further define $\Psi_\ell:=\Psi_{\epsilon_\ell,\ell,\Delta t_\ell}=\psi(\overline u_{\epsilon_\ell,\ell})$ and obtain with Corollary~\ref{astein_cor:overall_error}
\begin{equation} \label{astein_eq:psi_rmse}
\bbE(|\Psi-\Psi_\ell|^2)^{1/2}\le C\overline h_\ell^{2\kappa}.
\end{equation}

\section{Estimation of Moments by Multilevel Monte Carlo Methods}\label{astein_sec:mlmc}
As we are able to generate samples from $\Psi_\ell=\psi(\overline u_{\epsilon_\ell,\ell})$ and control for the discretization error in each sample,  
we may estimate the expectation $\bbE(\Psi)$ by Monte Carlo methods. For convenience, we restrict ourselves to the estimation of $\bbE(\Psi)$, but we note that all results from this section are valid when estimating higher moments of $\Psi$, given that $u\in L^r(\Omega;L^2(\bbT;V))$ for sufficiently high $r$ (cf. Theorem~\ref{astein_thm:exis}).
Our focus is on multilevel Monte Carlo (MLMC) estimators, since they are easily implemented, do not require much regularity of $\Psi$ and are significantly more efficient than standard Monte Carlo  estimators. The main idea of the MLMC estimation has been developed in \cite{H01} and later been rediscovered and popularized in \cite{G08}.
In this section, we briefly recall the MLMC method and then show how we achieve a desired error rate by adjusting the number of samples on each level to the discretization bias. 
We also suggest a modification of the MLMC algorithm to increase computational efficiency before we verify our results in Section~\ref{astein_sec:num}.

Let $L\in\N$ be a fixed (maximum) discretization level and assume that the approximation parameters on each level $\ell=0,\dots,L$ satisfy $\overline h_\ell^{2\kappa}\simeq\epsilon_\ell\simeq\Delta t_\ell$ (see Section~\ref{astein_sec:para_disc}). This yields a sequence $\Psi_0,\dots,\Psi_L$ of approximated QoIs, hence the \textit{MLMC  estimator} of $\EE(\Psi_L)$ is given by 
\begin{equation}\label{astein_eq:mlmc}
E^L(\Psi_L)=\sum_{\ell=0}^L\frac{1}{M_\ell}\sum_{i=1}^{M_\ell}\Psi^{(i,\ell)}_\ell-\Psi^{(i,\ell)}_{\ell-1},
\end{equation}
where we have set $\Psi_{-1}:=0$.
Above, $(\Psi^{(i,\ell)}_\ell-\Psi^{(i,\ell)}_{\ell-1},i\in\N)$ is a sequence of independent copies of $\Psi_\ell-\Psi_{\ell-1}$ and $M_\ell\in\N$ denotes the number of samples on each level.
To achieve a desired target root mean-squared error (RMSE), this estimator requires less computational effort than the standard Monte Carlo approach under certain assumptions.
This, by now, classical result was proven in \cite[Theorem 3.1]{G08} for functionals of stochastic differential equations. 
The proof is rather general and may readily be transferred to other applications, for instance the estimation of functionals or moments of random PDEs, see \cite{BSZ11, G15}.

\begin{theorem}\label{astein_thm:mlmc}
	Let Assumption~\ref{astein_ass:ahat} hold such that for some $\kappa\in(1/2,1]$ there holds
	$\bbE(\max_{i=1,\dots,\tau} \|u\|^2_{H^{1+\kappa}(\calT_i)})<+\infty$  and let $\overline h_{\ell-1}\le C_1 \overline h_\ell$ for some $C_1>0$ for all $\ell\in\N_0$. 
	For $L\in\N$ and given refinement parameters $\overline h_0>\dots>\overline h_L>0$ choose $\Delta t_\ell, \epsilon_\ell>0$ such that  
	$\epsilon_\ell,\Delta t_\ell \le  C_2\overline h_\ell^{2\kappa}$ holds for fixed $C_2>0$ and $\ell=0,\dots,L$.
	Furthermore, let $(\rho_\ell,\ell=1,\dots,L)\in (0,1)^L$ be a set of positive weights such that $\sum_{\ell=1}^L\rho_\ell=C_\rho$, with a constant $C_\rho>0$ independent of $L$, and set
	\begin{equation*}
	M_0^{-1}:=\ceil{\overline h_L^{4\kappa}}\quad\text{and}\quad M_\ell^{-1}:=\ceil{\frac{\overline h_L^{4\kappa}}{\overline h_\ell^{4\kappa}}\rho_\ell^{-2}}\quad\text{for $\ell=1,\dots,L$}.
	\end{equation*}
	Then, there is a $C>0$, independent of $L$ and $\kappa$, such that
	\begin{align*}
	\|\bbE(\Psi)-E^L(\Psi_L)\|_{L^2(\Omega;\R)}\le C\overline h_L^{2\kappa}.
	\end{align*}
\end{theorem}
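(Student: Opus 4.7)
\medskip

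\noindent\textbf{Proof proposal.} The natural starting point is the standard mean-square-error decomposition. Since the level-wise samples are independent and each telescoping increment $\Psi_\ell^{(i,\ell)}-\Psi_{\ell-1}^{(i,\ell)}$ is an unbiased sample of $\bbE(\Psi_\ell-\Psi_{\ell-1})$, the MLMC estimator satisfies $\bbE(E^L(\Psi_L))=\bbE(\Psi_L)$, so
\begin{equation*}
\|\bbE(\Psi)-E^L(\Psi_L)\|^2_{L^2(\Omega;\R)}
\;=\;|\bbE(\Psi-\Psi_L)|^2\;+\;\var(E^L(\Psi_L)).
\end{equation*}
I would first dispose of the bias: by Jensen's inequality and the path-wise RMSE bound~\eqref{astein_eq:psi_rmse} from Corollary~\ref{astein_cor:overall_error},
\begin{equation*}
|\bbE(\Psi-\Psi_L)|\le \bbE(|\Psi-\Psi_L|)\le \bbE(|\Psi-\Psi_L|^2)^{1/2}\le C\,\overline h_L^{2\kappa}.
\end{equation*}

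The second step is to bound the per-level variance of the telescopic differences. For $\ell=0$, the Lipschitz condition on $\psi$ together with Theorem~\ref{astein_thm:exis} and Theorems~\ref{astein_thm:a_error_V}/\ref{astein_thm:semi_error_H}/\ref{astein_thm:full_error_V} yields $\bbE(\Psi_0^2)\le C$, so $\var(\Psi_0)\le C$. For $\ell\ge 1$, I would insert the exact solution and use $(a-b)^2\le 2(a-c)^2+2(c-b)^2$:
\begin{equation*}
\var(\Psi_\ell-\Psi_{\ell-1})\le \bbE\bigl((\Psi_\ell-\Psi_{\ell-1})^2\bigr)
\le 2\bbE\bigl((\Psi-\Psi_\ell)^2\bigr)+2\bbE\bigl((\Psi-\Psi_{\ell-1})^2\bigr)
\le C\bigl(\overline h_\ell^{4\kappa}+\overline h_{\ell-1}^{4\kappa}\bigr)\le \widetilde C\,\overline h_\ell^{4\kappa},
\end{equation*}
where the last inequality uses the mild refinement assumption $\overline h_{\ell-1}\le C_1\overline h_\ell$ to absorb $\overline h_{\ell-1}^{4\kappa}$ into $C_1^{4\kappa}\overline h_\ell^{4\kappa}$.

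Next I would assemble the variance of the estimator via independence across levels:
\begin{equation*}
\var(E^L(\Psi_L))\;=\;\sum_{\ell=0}^L\frac{1}{M_\ell}\var(\Psi_\ell-\Psi_{\ell-1}).
\end{equation*}
Substituting the prescribed sample numbers (interpreted as $M_\ell\gtrsim \rho_\ell^{-2}\,(\overline h_\ell/\overline h_L)^{4\kappa}$ for $\ell\ge 1$ and $M_0\gtrsim \overline h_L^{-4\kappa}$) gives $M_0^{-1}\var(\Psi_0)\le C\overline h_L^{4\kappa}$ and, for $\ell\ge 1$,
\begin{equation*}
\frac{\var(\Psi_\ell-\Psi_{\ell-1})}{M_\ell}\le C\,\rho_\ell^{2}\,\frac{\overline h_L^{4\kappa}}{\overline h_\ell^{4\kappa}}\cdot \overline h_\ell^{4\kappa}=C\rho_\ell^{2}\,\overline h_L^{4\kappa}.
\end{equation*}
Summing and using $\rho_\ell\in(0,1)$ so that $\sum_{\ell=1}^L\rho_\ell^2\le \sum_{\ell=1}^L\rho_\ell=C_\rho$ produces $\var(E^L(\Psi_L))\le C(1+C_\rho)\overline h_L^{4\kappa}$. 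Combining with the bias bound finishes the proof.

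The computation itself is routine once the ingredients are in place; the only mildly delicate points are (i) verifying that the $L^2$-bound of Theorem~\ref{astein_thm:semi_error_H}—rather than an $L^r$-bound—is enough for the variance estimate (it is, since $\var\le \bbE(\cdot)^2$), and (ii) handling the level $\ell-1$ contribution by exchanging $\overline h_{\ell-1}$ for $\overline h_\ell$ using the refinement hypothesis, which is why the constant in the final estimate is independent of $L$ but depends on $C_1,C_2,C_\rho$. No step looks like a serious obstacle.
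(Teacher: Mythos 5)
Your proof is correct, and it follows the same overall skeleton as the paper (bias--statistical-error split, the bound $\|\Psi_\ell-\Psi_{\ell-1}\|_{L^2(\Omega;\R)}\le C\overline h_\ell^{2\kappa}$ via the triangle inequality through $\Psi$ and the refinement hypothesis $\overline h_{\ell-1}\le C_1\overline h_\ell$, and the choice of $M_\ell$ to equilibrate the levels). The one genuine difference is how the statistical error is summed over levels: you invoke the orthogonality identity $\var(E^L(\Psi_L))=\sum_{\ell}M_\ell^{-1}\var(\Psi_\ell-\Psi_{\ell-1})$, which requires independence of the level estimators, and you then need only $\sum_\ell\rho_\ell^2\le\sum_\ell\rho_\ell=C_\rho$. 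The paper instead applies the triangle inequality in $L^2(\Omega;\R)$ to get $\|\bbE(\Psi_L)-E^L(\Psi_L)\|_{L^2(\Omega;\R)}\le\sum_\ell M_\ell^{-1/2}\|\Psi_\ell-\Psi_{\ell-1}\|_{L^2(\Omega;\R)}$ and explicitly emphasizes that cross-level independence is \emph{not} used; this is deliberate, because the same proof then covers the coupled estimator $E^L_C(\Psi_L)$ of Eq.~\eqref{astein_eq:mlmc_bs}, whose level corrections share samples and are correlated. So your argument is sharper for the standard estimator but would not transfer to the coupled one without modification, whereas the paper's slightly cruder bound buys that extension for free. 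A minor point in your favor: you read the sample-number prescription as $M_\ell\gtrsim\rho_\ell^{-2}(\overline h_\ell/\overline h_L)^{4\kappa}$, i.e. $M_\ell^{-1}\lesssim\rho_\ell^{2}(\overline h_L/\overline h_\ell)^{4\kappa}$, which is the reading consistent with the paper's own final display and with the numerical section; the formula as printed in the theorem statement (with $\rho_\ell^{-2}$ inside the ceiling defining $M_\ell^{-1}$) would give $\sum_\ell\rho_\ell^{-1}$ and not close, so your silent correction is the right one.
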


\begin{proof}
	As all error contributions $\epsilon_\ell,\Delta t_\ell$ are adjusted to $\overline h_\ell$, we obtain by the triangle inequality and Eq.~\eqref{astein_eq:psi_rmse}
	\begin{align*} 
	\|\bbE(\Psi)-E^L(\Psi_L)\|_{L^2(\Omega;\R)}&\le\|\bbE(\Psi)-\bbE(\Psi_L)\|_{L^2(\Omega;\R)}+\|\bbE(\Psi_L)-E^L(\Psi_L)\|_{L^2(\Omega;\R)}\\
	&\le \|\Psi-\Psi_L\|_{L^2(\Omega;\R)}\\
	&\quad+\|\sum_{\ell=0}^L\bbE(\Psi_\ell-\Psi_{\ell-1})-\frac{1}{M_\ell}\sum_{i=1}^{M_\ell}(\Psi^{(i,\ell)}_\ell-\Psi^{(i,\ell)}_{\ell-1})\|_{L^2(\Omega;\R)}\\
	&\le C\overline h_L^{2\kappa} + \sum_{\ell=0}^L\frac{1}{\sqrt{M_\ell}}\|\Psi_\ell-\Psi_{\ell-1}\|_{L^2(\Omega;\R)}.
	\end{align*}
	At this point we emphasize that we did not use the independence of $\Psi^{(i,\ell)}_\ell-\Psi^{(i,\ell)}_{\ell-1}$ across the levels $\ell=1,\dots,L$ in the last inequality.
	We note that 
	\begin{equation*}
	\|\Psi_\ell-\Psi_{\ell-1}\|_{L^2(\Omega;\R)}\le \|\Psi-\Psi_{\ell}\|_{L^2(\Omega;\R)}+ \|\Psi-\Psi_{\ell-1}\|_{L^2(\Omega;\R)}\le C(1+C_1)\overline h_\ell^{2\kappa}
	\end{equation*}
	for $\ell\ge1$ and hence 
	\begin{align*} 
	\|\bbE(\Psi)-E^L(\Psi_L)\|_{L^2(\Omega;\R)}\le C\overline h_L^{2\kappa}+\|\Psi_0\|_{L^2(\Omega;\R)}\overline h_L^{2\kappa}+C(1+C_1)\overline  h_L^{2\kappa}\sum_{\ell=1}^L\rho_\ell
	\le C\overline h_L^{2\kappa}.
	\end{align*}\
\end{proof}

We remark that $C_\rho>0$ may act as a normalizing constant if MLMC estimators based on different discretization techniques are compared, an example is provided in Section~\ref{astein_sec:num}. 
To conclude this section, we briefly present a modified MLMC method to accelerate the estimation of $\bbE(\Psi_L)$. 
In the definition of the MLMC estimator from Eq.~\eqref{astein_eq:mlmc}, the terms in the second sum are independent copies of the corrections $\Psi_\ell-\Psi_{\ell-1}$. 
Hence, one has to generate a total of $M_\ell+M_{\ell+1}$ samples of $\Psi_\ell$ for each $\ell=0,\dots,L$ (where we have set $M_{L+1}:=0$). 
This effort may be reduced if we ``recycle'' the already available samples and generate the differences  
$\Psi^{(i,\ell)}_\ell-\Psi^{(i,\ell)}_{\ell-1}$ and
$\Psi^{(i,\ell)}_{\ell+1}-\Psi^{(i,\ell)}_\ell$
based on the same realization $\Psi^{(i,\ell)}_\ell$.
That is, we drop the second superscript $\ell$ above and arrive at the \textit{coupled MLMC  estimator} 
\begin{equation}\label{astein_eq:mlmc_bs}
E^L_{C}(\Psi_L):=\sum_{\ell=0}^L\frac{1}{M_\ell}\sum_{i=1}^{M_\ell}\Psi^{(i)}_\ell-\Psi^{(i)}_{\ell-1}.
\end{equation}
Instead of $M_\ell+M_{\ell+1}$ realizations of $\Psi_\ell$, the coupled MLMC estimator requires only $M_\ell$ samples of $\Psi_\ell$. 
The copies $\Psi^{(i)}_\ell$ are still independent in $i$, but not anymore across all levels $\ell$ for a fixed index $i$.
Clearly, $\bbE(E^L_{C}(\Psi_L))=\bbE(\Psi_L)$, and it holds
\begin{equation*}
\lim_{L\to+\infty}\bbE(E^L_{C}(\Psi_L))=\lim_{L\to+\infty}\bbE(E^L(\Psi_L))=\lim_{L\to+\infty}\bbE(\Psi_L) = \bbE(u).
\end{equation*}
The introduced modification is a simplified version of the \textit{Multifidelity Monte Carlo estimator} (see~\cite{GPW16}), where the weighting coefficients for all level corrections $\Psi_\ell-\Psi_{\ell-1}$ are set equal to one. 
An estimator similar to~\eqref{astein_eq:mlmc_bs} with coupled correction terms has also been introduced in the context of SDEs in \cite{RG15}. 
As we mentioned in the proof of Theorem~\ref{astein_thm:mlmc}, independence of the sampled differences $\Psi_\ell-\Psi_{\ell-1}$ across $\ell$ is not required for the error estimate, thus, the asymptotic order of convergence also holds for the coupled estimator. 
To compare RMSEs of the estimators from Eq.~\eqref{astein_eq:mlmc} and ~\eqref{astein_eq:mlmc_bs}, we calculate 
\begin{align*}
\var(E_{C}^L(\Psi_L))
&=\var\Big(\sum_{\ell=0}^L
\sum_{i=M_{\ell+1}+1}^{M_\ell}\sum_{k=0}^\ell\frac{\Psi^{(i)}_k-\Psi^{(i)}_{k-1}}{M_k}\Big)\\
&=\sum_{\ell=0}^L(M_{\ell}-M_{\ell+1})\var\left(\sum_{k=0}^{\ell}\frac{\Psi_k-\Psi_{k-1}}{M_k}\right)\\
&=\sum_{\ell=0}^L(M_{\ell}-M_{\ell+1})\Big(\sum_{k=0}^\ell\frac{\bbV_k}{M_k^2}+2\sum_{k=0}^\ell\sum_{j=0}^{k-1}\frac{\bbC_{j,k}}{M_jM_k}\Big)\\
&=\sum_{k=0}^L\Big(\frac{\bbV_k}{M_k^2}+2\sum_{j=0}^{k-1}\frac{\bbC_{j,k}}{M_jM_k}\Big)\sum_{\ell=k}^L(M_{\ell}-M_{\ell+1})\\
&=\var(E^L(\Psi_L))+2\sum_{k=0}^L\sum_{j=0}^{k-1}\frac{\bbC_{j,k}}{M_j},
\end{align*}
where $\bbV_k:=\var(\Psi_k-\Psi_{k-1})$ and $\bbC_{j,k}:=\cov(\Psi_j-\Psi_{j-1},\Psi_k-\Psi_{k-1})$.
Hence, the coupled estimator introduces a higher RMSE if the corrections $\Psi_\ell-\Psi_{\ell-1}$ are positively correlated across the levels.
In this case, we trade in variance for simulation time and the ratio of this trade-off is problem-dependent and hard to assess in advance. 

\section{Numerical Results}\label{astein_sec:num}
For our numerical experiment we consider $\bbD=(0,1)^2$ with $T=1$, initial data $u_0(x_1,x_2)=\frac{1}{10}\sin(\pi x_1)\sin(\pi x_2)$, source term $f\equiv1$ and set $\bar a\equiv0$. 
The covariance operator $Q$ of $W$ is given by the by the \emph{Mat\'ern covariance function}
\begin{equation*}
[Q\varphi](y):=\int_\bbD \sigma^2\frac{2^{1-\nu}}{\Gamma(\nu)}\Big(\sqrt{2\nu}\frac{\|x-y\|_2}{\chi}\Big)^\nu K_{\nu}\Big(\sqrt{2\nu}\frac{\|x-y\|_2}{\chi}\Big)\varphi(x)dx,\quad \varphi\in H,
\end{equation*}
with smoothness parameter $\nu>0$, variance $\sigma^2>0$ and correlation length $\chi>0$. 
Above, $\Gamma$ denotes the Gamma function, $\|\cdot\|_2$ is the Euclidean norm in $\R^2$ and $K_\nu$ is the modified Bessel function of the second kind with $\nu$ degrees of freedom.
We set the covariance parameters as $\nu=1.5,\sigma=0.5$ and $\chi=0.1$, hence Assumption~\ref{astein_ass:EV} is fulfilled, see \cite{Gr15}.
To approximate the Gaussian field, we use the circulant embedding method from \cite{Gr18a} to draw samples of $W$ at a grid of discrete points in $\bbD$ and then use linear interpolation to obtain an extension to $\overline\bbD$. We choose a maximum distance of $\epsilon>0$ for the grid points and denote the corresponding approximation by $W_\epsilon$.
Furthermore, we set $\Phi(\cdot)=\exp(\cdot)$ and observe that for any $s\in[1,\infty)$ 
\begin{align*}
\|\Phi(W)-\Phi(W_\epsilon)\|_{L^s(\Omega;L^\infty(\bbD))}
&\le C\EE\Big(\big(\sum_{j=1}^d\|\partial_{x_j}\Phi(W)\|_{L^\infty( \bbD)}\epsilon\big)^s\Big)^{1/s}\le C\epsilon
\end{align*}
holds by the path-wise Lipschitz regularity of $W$ and Lemma~\ref{astein_lem:a} (cf. Assumption~\ref{astein_ass:ahat}).

For the discontinuous random field $P$, we denote by $\calU((c_1,c_2))$ the uniform distribution on the interval $(c_1,c_2)\subset\R$,
sample four i.i.d. $\calU((0.2,0.8))$-distributed random variables $U_1,\dots,U_4$ and assign one $U_i$ to each side of the square $\partial\bbD$.
We then connect the points on two opposing edges by a straight line to obtain a random partition $\calT$ consisting of $\tau=4$ convex quadrangles.
Finally, we assign independent jump heights $P_1,P_2\sim\calU((0,1)), P_3\sim\calU((5,6))$ and $P_4\sim\calU((10,11))$ to the partition elements, such that two adjacent elements do not have the same jump distribution. This guarantees rather steep discontinuities across the interfaces in $\calT$, see Figure~\ref{astein_fig:sample}.
We do not need any approximation procedure for $P$ and obtain 
$a_\epsilon:=\exp(W_\epsilon) + P$.
Clearly, $a_\epsilon$ satisfies Assumption~\ref{astein_ass:ahat} and we define $b_\epsilon:=\max(-2a_\epsilon,-5)$.
The QoI is given by 
\begin{equation*}
\Psi(u):=\int_\bbD u(x)\exp(-0.25\|(0.25,0.75)-x\|_2^2)dx.
\end{equation*}

For the sample-adapted FE approach, we set the refinement parameters to $\overline h^{(a)}_\ell=\frac{1}{4}2^{-\ell/2}$ for $\ell\in\N_0$ and choose $\epsilon^{(a)}_\ell=\Delta t^{(a)}_\ell=(\overline h^{(a)}_\ell)^2$. While this choice gives an error equilibrium for $\kappa=1$, it ensures that for any $\kappa<1$ the RMSE is dominated solely by the spatial discretization error. Thus, we may infer the true value of $\kappa$ from the numerical experiment.
We also consider a non-adapted FE method with fixed and deterministic triangulations on $\bbD$.
For given approximation parameters $\epsilon,\overline h^{(na)}_\ell$ and $\Delta t$ in the non-adapted setting, we may not expect a better error bound than 
\begin{equation*}
\EE(|\Psi-\Psi_{\epsilon,\ell,\Delta t}|^2)^{1/2}\le C(\epsilon+\overline h^{(na)}_\ell+\Delta t)
\end{equation*}
in Corollary~\ref{astein_cor:overall_error}.
This is due to the fact that the standard FE method for elliptic problems with discontinuous coefficients does not converge at a better rate than $\calO((\overline h^{(na)})^{1/2})$ in the $V$-norm, see \cite[Remark 4.2]{BS18b}. Thus, if we consider again the dual problem as in Theorem~\ref{astein_thm:semi_error_H}, we may not expect a better rate than $\calO(\overline h^{(na)})$ with respect to the $H$-norm.
We choose the non-adapted FE grid with diameter $\overline h^{(na)}_\ell:=\frac{1}{4}2^{-\ell}$ and set accordingly $\epsilon^{(na)}_\ell=\Delta t^{(na)}_\ell =\overline h^{(na)}_\ell$.
In both FE methods, we use the midpoint rule on each triangle to approximate the entries of the stiffness matrix. The resulting quadrature error is of order $\calO(\overline h_\ell^2)$ with respect to the $H$-norm in the sample-adapted case and hence does not dominate the overall approximation error, see \cite[Section 2]{Gr18b}. For non-adapted FE, no a-priori estimate on the quadrature error is possible due to the discontinuities in $a$ and $b$, but our results suggest that this bias also in line with the overall approximation error. 
As $\epsilon_{\ell-1}=2\epsilon_\ell$, the circulant embedding grids (to sample $W_\epsilon$) are nested and we may achieve the MLMC coupling by first generating the discrete set of points on level $\ell$ and then taking the appropriate subset of points for level $\ell-1$.  

In the sample-adapted MLMC algorithm, we choose the number of samples via
\begin{equation*}
(M^{(a)}_0)^{-1}=\ceil{(\overline h^{(a)}_L)^4}\quad\text{and}\quad (M^{(a)}_\ell)^{-1}=\ceil{\frac{1}{4}\frac{(\overline h^{(a)}_L)^4}{(\overline h^{(a)}_\ell)^4}\Big(\frac{(\ell+1)^{-1.001}}{\sum_{k=1}^L(k+1)^{-1.001}}\Big)^{-2}}
\end{equation*}
for $\ell=1,\dots,L$, whereas, we choose 
\begin{equation*}
(M^{(na)}_0)^{-1}=\ceil{(\overline h^{(na)}_L)^2}\quad\text{and}\quad (M^{(na)}_\ell)^{-1}=\ceil{\frac{(\overline h^{(na)}_L)^2}{(\overline h^{(na)}_\ell)^2}\Big(\frac{(\ell+1)^{-1.001}}{\sum_{k=1}^L(k+1)^{-1.001}}\Big)^{-2}}
\end{equation*}
in the non-adapted MLMC approach.
Basically, we choose $1/M_\ell$ proportional to $\mathbb V_\ell=\text{Var}(\Psi_\ell-\Psi_{\ell-1})$ on each level and thus distribute the errors equally across all levels. Another possibility would be to distribute the computational effort equally (see \cite{G15}), which requires estimates on the cost of a single sample on each level.
The sequence $(\ell^{-c},\ell\in\bbN)$ decreases rapidly for $c>1$ and sums up to $\zeta(c)<+\infty$, where $\zeta(\cdot)$ is the Riemann $\zeta$-function. Hence, the above choice of $\rho_i$ ensures that only a few expensive samples on high levels are necessary and, due to the uniform bound $\sum_{\ell=1}^L\rho_\ell<\zeta(c)$, it is well suited to compare estimators for a varying choice of $L$. 
In terms of Theorem~\ref{astein_thm:mlmc}, we have chosen $C_\rho=2$ for the number of samples in the sample-adapted method, whereas $C_\rho=1$ for standard FE.
Similar calculations as in Theorem~\ref{astein_thm:mlmc} show that this choice leads  to $\|\Psi-E^L(\Psi_L)\|_{L^2(\Omega,\R)}\le C(2^{-2-L})$ in either case, where the constant $C$ is the same for adapted and non-adapted FE. Hence, $C_\rho$ is merely a normalizing constant and the above choice of $M_\ell$ ensures that both approaches produce a comparable error for fixed $L$. 
Finally, we calculate a reference QoI $\Psi_{ref}:=E^L(\Psi_L)$ with $L=7$ and the sample-adapted method and estimate the relative RMSE 
$\|\Psi_{ref}-E^L(\Psi_L)\|_{L^2(\Omega,\R)}/\Psi_{ref}$ for $L=0,\dots,5$ based on 50 independent samples of $E^L(\Psi_L)$ for the sample-adapted and non-adapted MLMC algorithm. 
For each approach, we use adapted/non-adapted FE combined with a standard/coupled MLMC estimator, thus we compare a total of four algorithms regarding their error decay and efficiency.

\begin{figure}[ht]
	\centering
	\subfigure{\includegraphics[height=0.2\textheight, width=0.49\textwidth,]{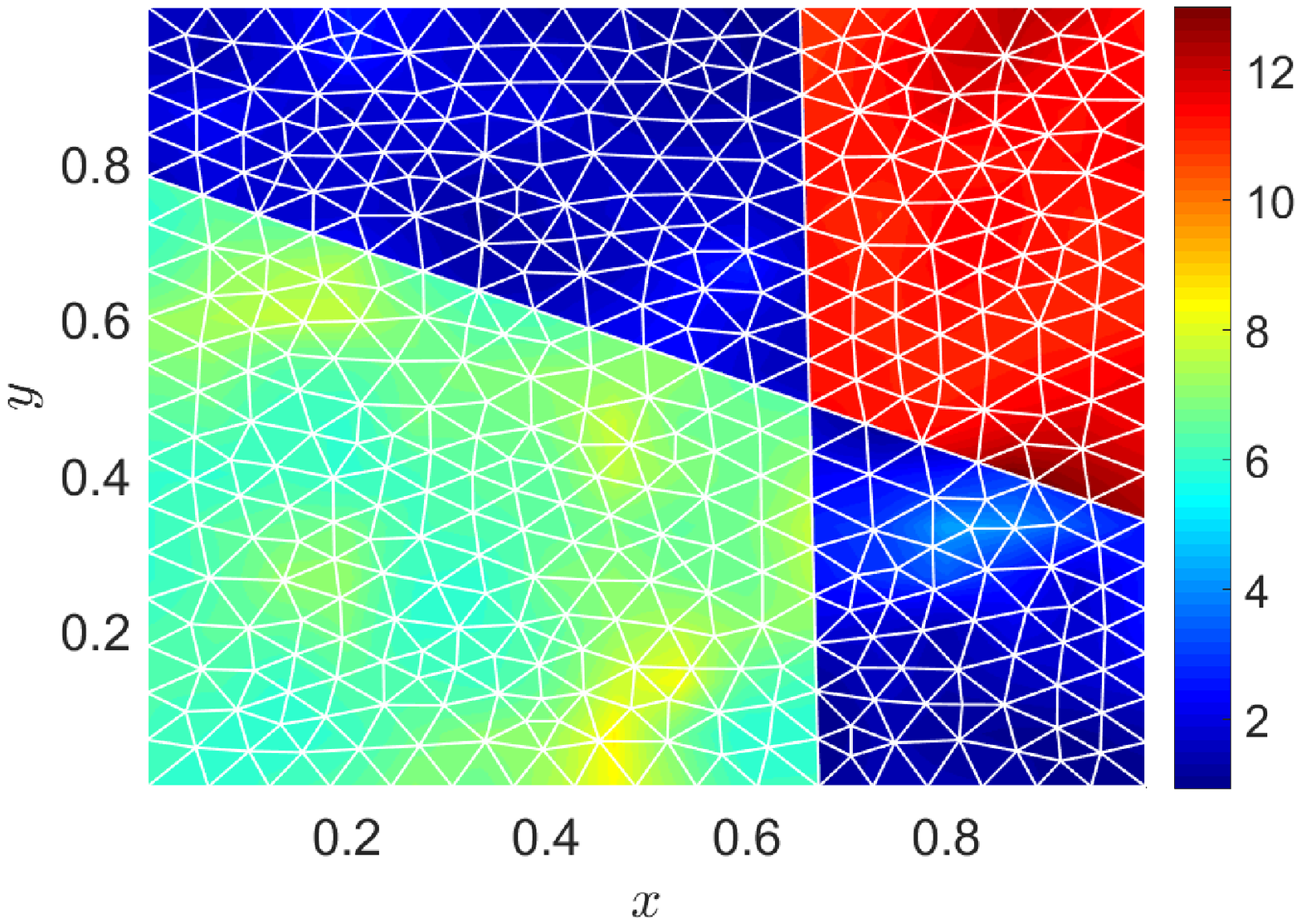}}
	\subfigure{\includegraphics[height=0.2\textheight, width=0.49\textwidth,]{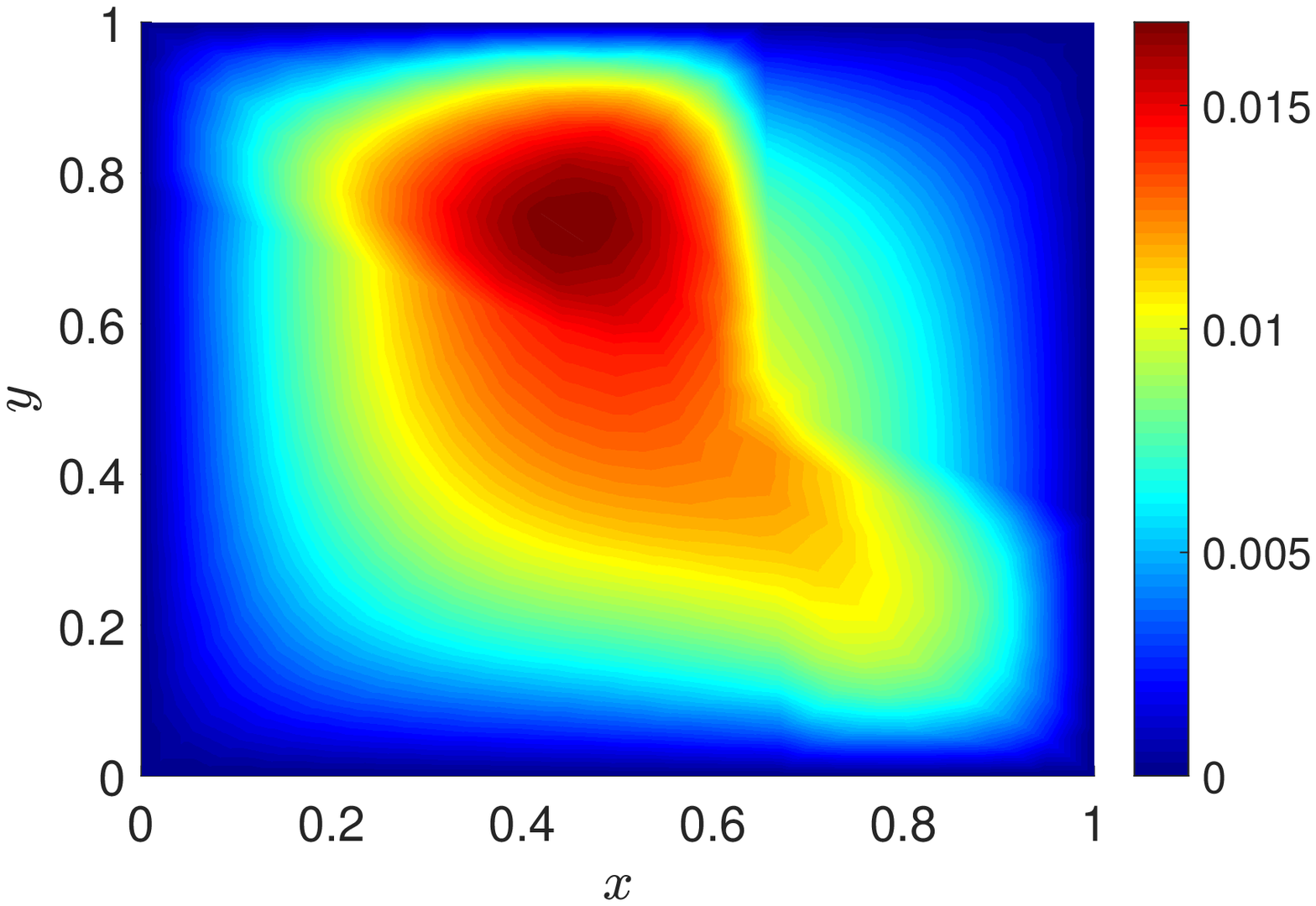}}
	\subfigure{\includegraphics[height=0.2\textheight, width=0.49\textwidth,]{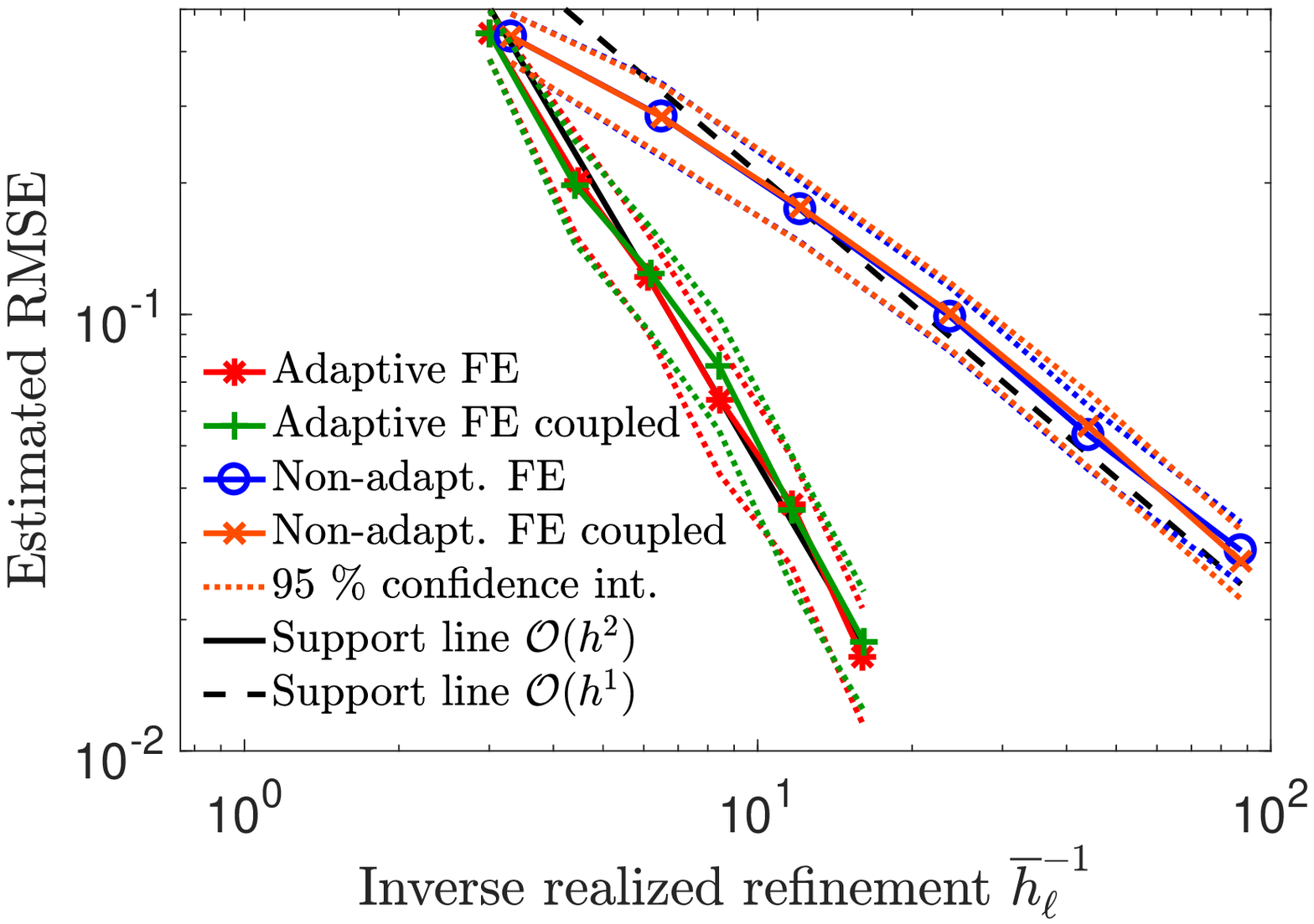}}
	\subfigure{\includegraphics[height=0.2\textheight, width=0.49\textwidth,]{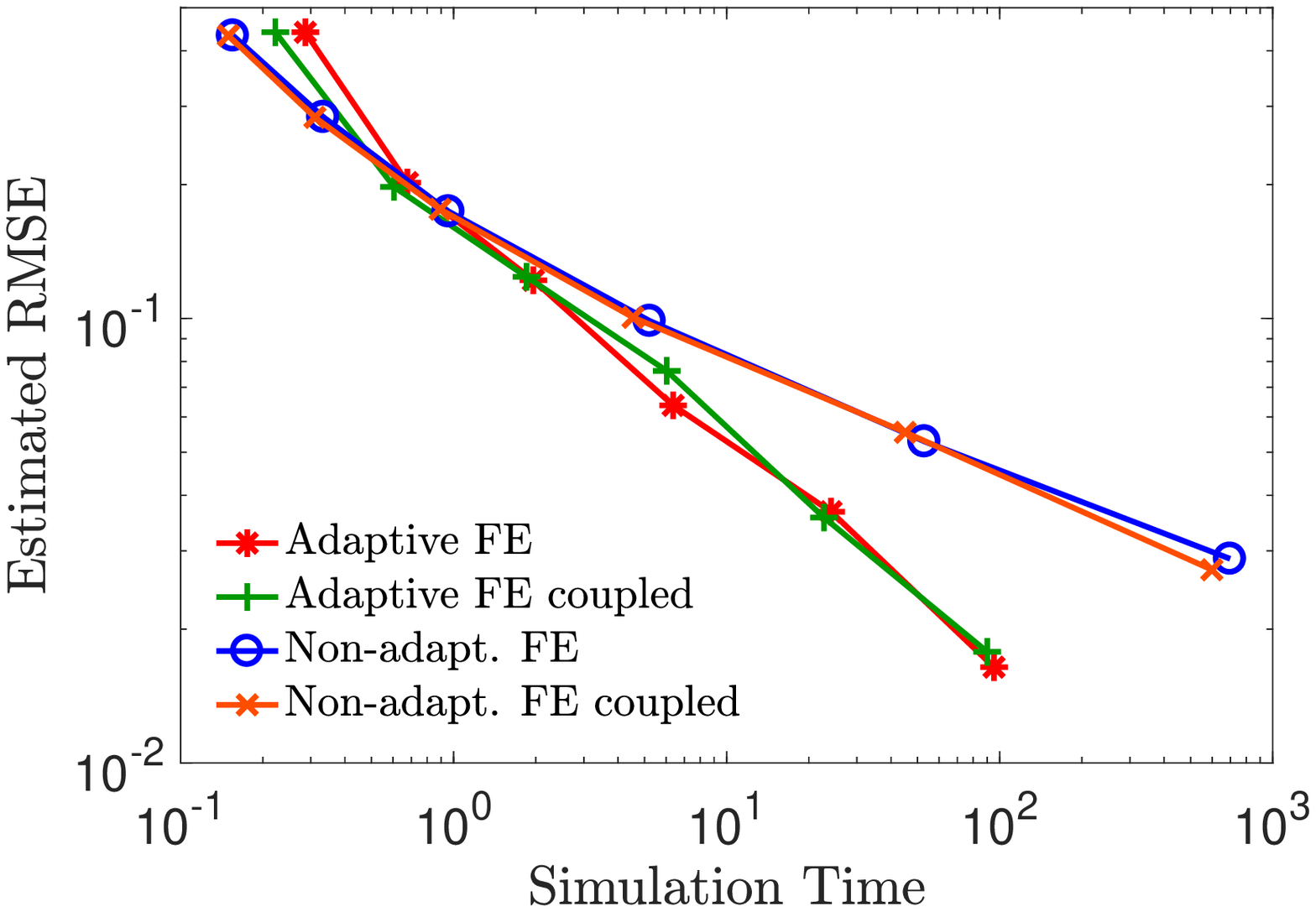}}
	\caption{Top: Sample of the diffusion coefficient with sample-adapted FE grid (left) and FE solution at $T=1$ (right). Bottom: RMSE vs. refinement (left) and RMSE vs. simulation time (right).}
	\label{astein_fig:sample}
\end{figure}

Figure~\ref{astein_fig:sample} confirms our theoretical results from Section~\ref{astein_sec:para_disc}, i.e. the sample-adapted spatial discretization yields rate $\calO(\overline h^2_\ell)$ compared to $\calO(\overline h_\ell)$ in the non-adapted setting. Hence, we are able to choose coarser spatial grids in the first approach which entails a better time-to-error ratio for both sample-adapted methods.
The results also indicate that $\kappa\approx 1$ holds for this particular example, otherwise we would see a lower rate of convergence than $\calO(\overline h^2_\ell)$ for the sample-adapted methods. 
While the sample-adapted FE grids have to be generated new for each sample, the $L+1$ deterministic grids for the non-adapted FE method are generated and stored before the Monte Carlo loop. However, as we see from the time-to-error plot, the extra work of renewing the FE meshes for each sample in the sample-adapted method is more than compensated by the increased order of convergence. 
The computational cost of the sample-adapted MLMC estimators are (roughly) inversely proportional to the squared errors, which is the best possible results one may achieve with MLMC, see \cite{G15} and the references therein.
To conclude, we remark that the coupled MLMC estimator yields a slight gain in efficiency if combined with non-adapted FE, whereas it produces similar results when using the sample-adapted discretization. We emphasize that there are scenarios where the coupled estimator outperforms standard MLMC and, on the other hand, there are examples were coupling performs worse due to high correlation terms $\bbC_{j,k}$ (for both, we refer to numerical examples in \cite{BS18b}.) Hence, even though performance is similar to standard MLMC, it makes sense to consider the coupled estimator in our scenario. As we have mentioned at the end of Section~\ref{astein_sec:mlmc}, this behavior may not be expected a-priori. 

\begin{acknowledgement}
The research leading to these results has received funding from the German Research Foundation (DFG) as part of the Cluster of Excellence in Simulation Technology (EXC 310/2) at the University of Stuttgart and it is gratefully acknowledged.
\end{acknowledgement}


\end{document}